\pgfplotsset{compat=newest}
\def\N{{\mathbb R}}
\def\G{{\mathcal{G}}}
\def\F{{\mathcal{F}}}
\def\({{\Bigl(}}
\def\){{\Bigr)}}
\newtheorem{theorem}{Theorem}[section]
\newtheorem{lemma}[theorem]{Lemma}
\newtheorem{prop}[theorem]{Proposition}
\newtheorem{coro}[theorem]{Corollary}
\newtheorem{assumption}[theorem]{Assumption}
\newcommand{\cardinality}[1]{ | #1 | }
\renewcommand{\d}[1]{\ensuremath{\operatorname{d}\!{#1}}}
\newcommand{\e}[1]{ {\mathrm{e}}^{ #1 } }
\newcommand{\expectation}[1]{ \mathbb{E} [ #1 ] }
\newcommand{\expectationBig}[1]{ \mathbb{E} \Bigl[ #1 \Bigr] }
\newcommand{\vect}[1]{ \boldsymbol{#1} }
\newcommand{\indicator}[1]{ \mathds{1} [ #1 ] }
\newcommand{\indicatorBig}[1]{ \mathds{1} \Bigl[ #1 \Bigr] }
\newcommand{\process}[2]{ \{ #1 \}_{ #2 } }
\newcommand{\smallO}[1]{ o(#1) }
\newcommand{\bigO}[1]{ O(#1) }
\newcommand{\probability}[1]{ \mathbb{P} [ #1 ] }
\newcommand{\probabilityBig}[1]{ \mathbb{P} \Bigl[ #1 \Bigr] }
\newcommand{\variance}[1]{ \mathrm{Var} [ #1 ] }
\def\eqcom#1{\overset{\textnormal{(#1)}}}
\newcommand{\refFigure}[1]{{\textrm{Figure~\ref{#1}}}}
\newcommand{\refEquation}[1]{{\textrm{\eqref{#1}}}}
\newcommand{\refCorollary}[1]{{\textrm{Corollary~\ref{#1}}}}
\newcommand{\refProposition}[1]{{\textrm{Proposition~\ref{#1}}}}
\newcommand{\refLemma}[1]{{\textrm{Lemma~\ref{#1}}}}
\newcommand{\refSection}[1]{{\textrm{Section~\ref{#1}}}}
\newcommand{\refAppendixSection}[1]{\textrm{Appendix~\ref{#1}}}
\newacronym{LLN}{LLN}{Law of Large Numbers}
\newacronym{CLT}{CLT}{Central Limit Theorem}
\newacronym{ER}{ER}{Erd\"{o}s--R\'{e}nyi}
\newacronym{RSA}{RSA}{random sequential adsorption}
\newacronym{RGG}{RGG}{Random Geometric Graph}
\title{Scaling limits and Generic Bounds for Exploration Processes}
\author{Paola Bermolen \and Matthieu Jonckheere \and Jaron Sanders}
\begin{document}

\maketitle


\begin{abstract}
We consider exploration algorithms of the random sequential adsorption type both for  homogeneous random graphs and random geometric graphs based on spatial Poisson processes. At each step, a vertex of the graph becomes active and its neighboring nodes become explored. Given an initial number of vertices $N$ growing to infinity, we study statistical properties of the proportion of explored nodes in time using scaling limits.
We obtain exact limits for homogeneous graphs and prove an explicit central limit theorem  for the final proportion of active nodes, known as the \emph{jamming constant}, through a diffusion approximation for the exploration process. 
We then focus on bounding the trajectories of such exploration processes on random geometric graphs, i.e.\ random sequential adsorption. As opposed to homogeneous random graphs, these do not allow for a reduction in dimensionality. Instead we build on a fundamental relationship between the number of explored nodes and the discovered volume in the spatial process, and obtain generic bounds: bounds that are independent of the dimension of space and the detailed shape of the volume associated to the discovered node. Lastly, we give two trajectorial interpretations of our bounds by constructing two coupled processes that have the same fluid limits.  
\end{abstract}

\section{Introduction}

Exploration algorithms occur in a wide a range of processes, for example in the evolution of parking processes \cite{bermolen_jc_2016}, as well as random sequential adsorption processes \cite{evans_random_1993}. A typical exploration algorithm works as follows: Assume there exists a binary relation between items $V =\{ 1, \ldots, N \}$, to which we associate a relation graph where nodes are items such that two items are neighbors if they are related. Let $A_t$ be the set of active items at step $t$ and $B_t$ the set of explored items.
We assume that, initially, $A_0=B_0=\{\emptyset\}$. Then we consider the following exploration process:
\begin{itemize}
\item[(i)] select $I_t \subseteq V \setminus \{ A_t \cup B_t \}$ and determine its neighbors in the set of non-explored items,
$\mathcal{N}_t \subseteq V \setminus \{ A_t \cup B_t \cup I_t \}$, and 
\item [(ii)] update $A_t$ and $B_t$ by setting $A_{t+1} = A_t \cup I_t$ and $B_{t+1}= B_t\cup \mathcal{N}_t$.
\end{itemize}
In case $I_t$ is a single item at each step, the algorithm discovers in a greedy manner independent sets of the relation graph. If $I_t$ is a subgraph, $\mathcal{N}_t$ is the set of non-active, non-explored neighbors of this subgraph at step $t$.

Exploration algorithms can also be identified in diverse applications. For example, they can be linked to communication procedures in communication networks \cite{evans_random_1993}. In this context, the relation graph might be the outcome of random spatial effects, i.e.\ nodes could be interacting through a geometry which itself can be random. This is certainly the case in wireless networks in which acceptable radio conditions have been defined based on the level of admissible interference between two competing nodes, so conditions that determine whether simultaneous communication is possible. These interference constraints can be modeled using a hardcore interference graph, in which an edge is present between two nodes if their radio conditions would impede a simultaneous communication \cite{bermolen_estimating_2016}. 

Relation graphs and associated exploration processes can also be used to describe blockade effects in complex systems of interacting particles. Consider for instance gases of ultracold atoms that can change between a so-called Rydberg state and a ground state \cite{gallagher_rydberg_1994}. The essential feature of these gases is that once an atom is in its Rydberg state, it prevents neighboring atoms from reaching their Rydberg state. By modeling the dynamics of the excitation process as an exploration process on a relation graph (specifically, the canonical \gls{ER} graph),  the statistical properties of the proportion of atoms ultimately in the Rydberg state were studied in \cite{sanders_sub-poissonian_2015}. This article formalizes and generalizes the heuristical arguments provided there.

The dimension needed to represent exploration algorithms as a Markov process is in general equal to the size of the underlying graph, $N$, and this impedes both the analysis and the ability to perform explicit calculations. To overcome these difficulties, we first assume that a homogeneity assumption holds for the relation between items. This allows us to describe the dynamics using a one-dimensional Markov process, instead of a prohibitively more complicated $N$-dimensional description. While this is a coarse simplification for many problems, it has the advantage that the analysis remains tractable. We then use classical tools of probability theory, specifically fluid limits and diffusion approximations, to derive computable characterizations of the performance of these exploration algorithms. In particular we provide not only convergence results but also error bounds. These techniques furthermore allow us to prove a \gls{LLN} as well as a \gls{CLT} for the proportion of active nodes in the jamming limit, i.e.\ at the end of the exploration.

While the techniques to obtain a \gls{LLN} for the exploration process of the \gls{ER} graph are well-known in the literature, see for instance the survey \cite{Darling2008}, our first contribution is to use such approaches to also obtain a \gls{LLN} and \gls{CLT} for the jamming constant (the final proportion of active nodes). Our results also cover exploration processes on other, sufficiently symmetric graphs. By next building on the recent results in \cite{berkes2014}, we can also give error bounds for the diffusion approximation, which in turn allows to construct confidence intervals for the jamming constant. These results have earlier been made available in the PhD thesis \cite{sanders_stochastic_2016}, as well as in the technical report \cite{bermolen_scaling_2015}.

Our new, second contribution consists of using the fluid and diffusion limits of exploration processes to obtain bounds for the more involved case of random geometric graphs. When the neighborhood relation is constructed using a spatial Poisson process and a geometric distance (say the Euclidean one), the correlations between edges of the graph have a much more complex structure. As underlined before, the exploration process can then not be reduced to the analysis of a one-dimensional Markov process. However, by building on a fundamental relationship between the number of explored nodes and the discovered volume in the spatial process (translating the properties of the Poisson process), we obtain bounds for the fluid limit of the exploration process. For both our lower and upper bound on the fluid limit, we propose a coupling of the original process with processes which have a lower and higher exploration rate, respectively.

The rest of the paper is structured as follows. In \refSection{sec:Random_adsorption_under_a_homogeneous_relation}, we set the model for homogeneous and symmetric random graphs and obtain classical scaling limits in this context. Then, building on diffusion approximations errors bounds, we provide diffusion approximations for both the exploration process and the jamming constant. In \refSection{sec:RGG}, we characterize an important asymptotic relation between the mean discovered volume and the mean number of explored points for random relation graphs associated to Poisson point processes. Building on this relation, we obtain asymptotic bounds for the exploration process and provide trajectorial couplings for these bounds. Finally, in \refSection{sec:Simulations}, we provide simulations of the exploration process and the jamming constant of the geometric random graph, which we compare to our bounds.

\section{Fluid limit and diffusion approximation for homogeneous graphs}
\label{sec:Random_adsorption_under_a_homogeneous_relation}

Assume from here on that precisely one item is selected in each step. Assume also the following homogeneity property on the graph $\G$:
\begin{assumption}
If $(\G_1, \G_2)$ is a partition of $\G$, then the mean number of edges from $\G_1$ to $\G_2$ depends only
on the size of the partition, i.e.\ $\cardinality{ \G_1 }$ and $\cardinality{ \G_2 }$.
\label{A1}
\end{assumption}

While Assumption (\ref{A1}) is not valid in cases such as random geometric graphs and random graphs with generic degree distribution, it is crucial to be able to describe the exploration process in one dimension. Assumption (\ref{A1}) can however be considered a reasonable approximation for many systems, and it is for example satisfied by \gls{ER} random graphs. We refer the reader to \cite{bermolen_jc_2016} for a study of scaling limits in infinite dimension that applies to a larger class of problems.

If we let $Z_n$ denote the number of explored items at step $n$, so $Z_n= \cardinality{ A_n \cup B_n }$, we have that
\begin{equation}
Z_0 = 0,
\quad \text{and}
\quad Z_n = Z_{n-1} + 1 + \xi_n.
\end{equation}
Here, $\xi_n$ denotes the number of neighbors that the selected item has at step $n$ in the remaining non-explored portion of the graph.

The distribution of $\xi_n$ depends under Assumption (\ref{A1}) only on $Z_{n-1}$, which we will denote by $\xi_{Z_{n-1}}$ with a slight abuse of notation. Assumption (\ref{A1}) also implies that the process $\process{ Z_n }{ n \in \N }$ is a discrete Markov process that takes values in $\{0,\dots, N\}$, is strictly increasing, and has in $N$ an absorbing state. The transition probabilities at step $n$ of the process $\process{ Z_n }{ n \in \N }$ are therefore given by
\begin{equation}
p_{xy}(n)
= \probability{ Z_n = y | Z_{n-1} = x }
= \probability{ \xi_x = y - x - 1 }
\quad
\text{for}
\quad
y > x.
\end{equation}

If we now denote by $p_N(\cdot,x)$ the distribution of the number of neighbors in $\G_2$ of any vertex $i\in \G_2$ given that $(\G_1,\G_2)$ is a partition of $\G$ with $|\G_1|=x$, the transition probabilities can be written as
\begin{equation}
p_{x, x+k+1}(n)
= \probability{ \xi_x=k }
= p_N(k,x),
\quad
\text{with}
\quad
k \geq 0.
\end{equation}
In case of the \gls{ER} random graph, in which an edge exists between a pair of nodes with probability $p$, the transition probabilities are given by the Binomial distribution, i.e.\
\begin{equation}
p_N(k,x) = \binom{N-x-1}{k} p^k (1-p)^{N-x-k-1}.
\end{equation}


Given a partition $(\G_1,\G_2)$ of $\G$ such that $|\G_1|=x$, we denote by $\gamma_N(x)$ the mean and by $\psi_N(x)$ the variance of the number of neighbors in $\G_2$ of a given vertex $i\in\G_2$, i.e.\
\begin{equation}
\gamma_N(x) = \sum_{k=0}^{N-1} k p_N(k,x),
\quad
\psi_N(x) = \sum_{k=0}^{N-1} (k-\gamma_N(x))^2 p_N(k,x).
\end{equation}
We also define $\bar{\gamma}_N = \sup_{x} \gamma_N(x)$, $\bar{\psi}_N = \sup_{x} \psi_N(x)$. 

Now consider the scaled process defined as the piece-wise constant trajectory process of
\begin{equation}
Z^{N}_t
= \frac{ Z_{[tN]} }{N}
\end{equation}
for all $t \geq 0$. Here, $[x]$ is the integer part of $x$. We derive a fluid limit for $Z_t^N$ in \refProposition{prop:LLN_comparing_Znt_and_Zt}. While the proof of convergence relies on classical techniques \cite{Darling2008}, we leverage these tools to also obtain error bounds along the way. The proof of \refProposition{prop:LLN_comparing_Znt_and_Zt} is deferred to \refAppendixSection{appendix:Proof_of_proposition_LLN_comparing_Znt_and_Zt}.

\begin{prop}
\label{prop:LLN_comparing_Znt_and_Zt}
If there exists a ($C_L$)-Lipschitz continuous function $\gamma$ on $\mathbb R^+$ such that
\begin{equation}
\sup_{x} \Bigl| \gamma_N(x) - \gamma \Bigl( \frac{x}{N} \Bigr) \Bigr|
\leq \delta_N,
\label{eq:def_gamma}
\end{equation}
then for $p > 1$ and $T > 0$,
\begin{equation}
\lVert \sup_{s \in [0,T]} |Z^{N}_s- z(s)| \rVert_p
\leq \Big( \delta_N T + \frac{1 + \bar{\gamma}_N }{N} + \kappa_p \Bigl\lVert \frac{M_{[TN]}}{N} \Bigr\rVert_p \Big) \e{ C_L T },
\label{eqn:Bound_on_Lp_norm_of_supremum_distance_ZsN_and_zs}
\end{equation}
where $\kappa_p = p / (p-1)$, and $z(t)$ denotes the solution to the deterministic differential equation
\begin{equation}
\dot{z}(t)
= 1 + \gamma( z(t) ),
\quad
\textrm{with}
\quad
z(0) = 0, 
\quad
\textrm{for}
\quad
t \le T^*=\min \{s : z(s)=1\}.
\label{eq:ODE.disc}
\end{equation}
Here, $M_n = Z_n - \sum_{i=0}^n ( 1 + \gamma_N(Z_i) )$ denotes a global martingale.

For $p = 2$, the bound reduces to
\begin{equation}
\lVert \sup_{t \in [0,T]} |Z^{N}(t)- z(t)| \rVert_2
\leq \left( \delta_N T + \frac{1+\bar{\gamma}_N}{ N} + 2 \sqrt{\frac{ \bar{\psi}_N T}{ N }} \right) \exp{(C_L T)}
= \omega_N.
\end{equation}
\end{prop}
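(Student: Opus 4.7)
The plan is to decompose the scaled process using the given Doob martingale, and then bound the three resulting error sources separately: the drift approximation error (via $\delta_N$), the discrete-to-continuous approximation error, and the martingale fluctuations (via Doob's maximal inequality). First I would write $Z_{[tN]} = \sum_{i=0}^{[tN]-1}(1+\gamma_N(Z_i)) + M_{[tN]}$ by definition of $M_n$, divide by $N$, and compare to $z(t) = \int_0^t (1+\gamma(z(s)))\,\mathrm{d}s$. Approximating the integral by its Riemann sum on the mesh $\{i/N\}$ and using that $z$ is differentiable with $|\dot z|\leq 1 + \bar\gamma_N$ controls the discretization at a cost of order $(1+\bar\gamma_N)/N$, modulo one extra boundary term from $[tN]/N \neq t$.

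Next I would bound $|\gamma_N(Z_i)-\gamma(Z_i/N)|\leq\delta_N$ uniformly by hypothesis \eqref{eq:def_gamma}, and use the $C_L$-Lipschitz property of $\gamma$ to convert $|\gamma(Z_i/N)-\gamma(z(i/N))|$ into $C_L|Z^N_{i/N}-z(i/N)|$. Putting this together, I expect to obtain an inequality of the form
\begin{equation}
\sup_{s\in[0,t]}|Z^N_s - z(s)| \;\leq\; \delta_N T + \frac{1+\bar\gamma_N}{N} + \frac{\sup_{s\in[0,t]}|M_{[sN]}|}{N} + C_L\int_0^t \sup_{u\in[0,s]}|Z^N_u - z(u)|\,\mathrm{d}s,
\end{equation}
valid on $[0,T]$, after which Grönwall's lemma produces the multiplicative factor $\e{C_L T}$.

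Taking the $L^p$ norm on both sides, the only stochastic term that survives is $\|\sup_{s\in[0,T]}|M_{[sN]}|\|_p/N$. Since $\process{M_n}{n\in\mathbb N}$ is a martingale with respect to the natural filtration, Doob's $L^p$ maximal inequality yields $\|\sup_{n\leq[TN]}|M_n|\|_p \leq \kappa_p \|M_{[TN]}\|_p$ with $\kappa_p = p/(p-1)$, giving \eqref{eqn:Bound_on_Lp_norm_of_supremum_distance_ZsN_and_zs}. For the $p=2$ specialization, I would compute $\E[M_{[TN]}^2]$ by orthogonality of martingale increments: since the conditional variance of $M_{i+1}-M_i$ given $\mathcal F_i$ equals $\psi_N(Z_i)\leq \bar\psi_N$, I get $\E[M_{[TN]}^2]\leq [TN]\bar\psi_N \leq TN\bar\psi_N$, hence $\|M_{[TN]}/N\|_2 \leq \sqrt{T\bar\psi_N/N}$, and Doob's constant is $\kappa_2 = 2$, yielding the stated $\omega_N$.

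The main obstacle I foresee is bookkeeping: being careful with the boundary term arising from $[tN]/N \neq t$, distinguishing cleanly between $\sup_{s\in[0,t]}$ of the continuous-time process and the maximum over discrete indices, and verifying that the Grönwall argument can be applied to the increasing map $t\mapsto \sup_{s\in[0,t]}|Z^N_s - z(s)|$ before taking $L^p$ norms (rather than applying Grönwall in $L^p$, which is delicate). Once the path-wise inequality is correctly set up, the $L^p$ bound follows by a single application of Doob's maximal inequality and the triangle inequality in $L^p$.
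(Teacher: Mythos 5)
Your proposal follows essentially the same route as the paper's proof: the same Doob decomposition, the same three-way split into drift error ($\delta_N T$), discretization error ($(1+\bar\gamma_N)/N$), and martingale fluctuation, followed by Grönwall's lemma path-wise, Minkowski's inequality, Doob's $L^p$ maximal inequality with $\kappa_p = p/(p-1)$, and the quadratic-variation computation $\E[M_{[TN]}^2] \le [TN]\,\bar\psi_N$ for the $p=2$ case. The only cosmetic difference is that the paper converts the compensator sum into an integral of the piecewise-constant process and then shifts the upper limit from $[tN]/N$ to $t$, while you approximate $\int_0^t(1+\gamma(z(s)))\,\mathrm{d}s$ by a Riemann sum on the mesh $\{i/N\}$; both yield the same $(1+\bar\gamma_N)/N$ term.
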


\begin{coro}
\label{coro.fluid.disc}
If the distribution of the number of neighbors is such that $\delta_N \to 0$ as $N \to \infty$, $\bar{\gamma}_N = \smallO{N}$, and $\bar{\psi}_N = \smallO{N}$, then the scaled process $Z^N_t$ converges to $z(t)$ in $L^1$ uniformly on compact time intervals.
\end{coro}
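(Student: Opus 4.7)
The plan is to deduce the corollary directly from \refProposition{prop:LLN_comparing_Znt_and_Zt} specialized to $p = 2$, since the three stated hypotheses are exactly what is needed to make each term in the right-hand side bound $\omega_N$ vanish as $N \to \infty$. Concretely, I would fix $T > 0$, invoke the $p=2$ version of the proposition, and analyze each summand inside the bracket
\begin{equation*}
\omega_N = \Bigl( \delta_N T + \frac{1 + \bar{\gamma}_N}{N} + 2 \sqrt{\frac{\bar{\psi}_N T}{N}} \Bigr) \exp(C_L T).
\end{equation*}
The first term tends to $0$ because $\delta_N \to 0$ by the assumption that $\gamma_N$ converges uniformly to the Lipschitz function $\gamma$; the second tends to $0$ because $\bar{\gamma}_N / N = \smallO{1}$; the third tends to $0$ because $\bar{\psi}_N / N = \smallO{1}$ so its square root also vanishes. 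The factor $\exp(C_L T)$ is a fixed constant depending only on $T$, hence $\omega_N \to 0$.

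Once $\omega_N \to 0$ is established, the passage from $L^2$ to $L^1$ is immediate: by the Cauchy--Schwarz inequality (or Jensen's inequality applied to the convex function $x \mapsto x^2$),
\begin{equation*}
\expectation{ \sup_{t \in [0,T]} | Z^N_t - z(t) | }
\leq \Bigl( \expectation{ \sup_{t \in [0,T]} | Z^N_t - z(t) |^2 } \Bigr)^{1/2}
\leq \omega_N
\xrightarrow[N \to \infty]{} 0.
\end{equation*}
This establishes convergence in $L^1$ of the supremum of the trajectorial distance on $[0,T]$, which is the standard meaning of uniform convergence in $L^1$ on compact time intervals. Since $T$ was arbitrary, the corollary follows on every compact interval.

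There is essentially no obstacle here: the corollary is a direct translation of the three quantitative conditions on $\delta_N$, $\bar{\gamma}_N$, and $\bar{\psi}_N$ into the vanishing of the explicit error bound provided by \refProposition{prop:LLN_comparing_Znt_and_Zt}. The only subtle point worth mentioning explicitly in the write-up is that the proposition is stated for $t \in [0,T]$ while the ODE \eqref{eq:ODE.disc} is only well-defined up to $T^\star = \min\{ s : z(s) = 1 \}$; so formally one should take $T < T^\star$ (or equivalently work on $[0,T \wedge T^\star]$), which still yields uniform convergence on every compact sub-interval of $[0, T^\star)$ as required.
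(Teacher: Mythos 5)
Your proposal is correct and is exactly the argument the paper intends: the corollary is stated as an immediate consequence of the $p=2$ bound in \refProposition{prop:LLN_comparing_Znt_and_Zt}, with the three hypotheses making each summand of $\omega_N$ vanish and the $L^2$-to-$L^1$ step following from Jensen/Cauchy--Schwarz. Your remark about restricting to $T < T^*$ is a sensible precision that the paper leaves implicit.
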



We now proceed and derive a diffusion approximation theorem for the scaled number of explored items $Z_t^N$. The convergence proof relies similarly on classical techniques, which we again leverage to determine error bounds. To that end, we apply results of \cite{kurtz78} which are based on results by Koml\'{o}s--Major--Tusn\'{a}dy \cite{komlos_approximation_1975,komlos_approximation_1976}. The results in \cite{kurtz78} allow one to construct a Brownian motion and either a Poisson process or random walk on the same probability space. Since we are concerned with discrete time, we need to consider the random walk case, see also \cite{berkes2014}. In order to obtain explicit error bounds, we impose stronger assumptions on the transitions probabilities than would be needed when only proving convergence. Our proof of \refProposition{prop:Diffusion_approximation_with_error_bound} can be found in \refAppendixSection{appendix:Proof_of_proposition_Diffusion_approximation_with_error_bound}.

\begin{prop}
\label{prop:Diffusion_approximation_with_error_bound}
If there exists a function $p$ on $(\mathbb N,\mathbb R_+)$, and a sequence $(\epsilon_k)_{k=0,1,\ldots}$ such that
\begin{align}
|p_N(k,[Nx])- p(k,x)| & \leq \frac{\epsilon_k}{N}, \nonumber \\
|p(x,x+k)- p(y,y+k)| & \leq M \epsilon_k |x-y|, \nonumber \\
\sum_{k} k^2 |p(x,x+k)^{1/2}- p(y,y+k)^{1/2}|^2 & \leq M  |x-y|^2
\end{align}
and
\begin{equation}
\sum_{k} k \epsilon_k^{1/2} < \infty,
\end{equation}
and if $\gamma$ is twice differentiable with bounded first and second derivatives, then the process
\begin{equation}
W^N_t = \sqrt{N} ( Z^N_t-z(t) )
\label{eqn:Definition_of_WNt}
\end{equation}
converges in distribution towards $W_t$, the unique solution
of the stochastic differential equation
\begin{gather}
\d{W(t)}
= \gamma'(z(t)) W(t) \d{t} + \sqrt{\beta'(t)} \d{B_1(t)}.
\end{gather}
Here, $B_1(t)$ denotes a standard Brownian motion, $\beta(t) =  \int_{0}^t  \psi(z(s)) \d{s}$, and $z(t)$ is the solution of \refEquation{eq:ODE.disc}. Furthermore,
\begin{equation}
\expectationBig{ \sup_{t \le T} | W^N_t - W_t | }
\leq C \frac{\log(N)}{\sqrt{N}}.
\end{equation}
\end{prop}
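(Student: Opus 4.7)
The strategy is to rescale the Doob decomposition $Z_n = \sum_{i=0}^{n-1}(1+\gamma_N(Z_i)) + M_n$ already used in Proposition~2.1 at the diffusive scale. Subtracting the fluid ODE $\dot z = 1+\gamma(z)$ and multiplying by $\sqrt{N}$ yields, up to a piecewise-constant discretisation error of order $1/\sqrt{N}$,
\begin{equation}
W^N_t = \int_0^t \sqrt{N}\bigl(\gamma_N(Z_{[sN]}) - \gamma(z(s))\bigr)\, ds + \frac{M_{[tN]}}{\sqrt{N}} + \varepsilon^N_t.
\end{equation}
The hypothesis $|p_N(k,[Nx]) - p(k,x)| \le \epsilon_k/N$ together with $\sum_k k\epsilon_k < \infty$ (implied by $\sum_k k\epsilon_k^{1/2} < \infty$) controls $|\gamma_N([Nx])-\gamma(x)|$ by $O(1/N)$ uniformly, and a first-order Taylor expansion of the twice-differentiable $\gamma$ then reduces the drift term to $\int_0^t \gamma'(z(s))\, W^N_s\, ds$ plus a remainder that is quadratic in $W^N$ divided by $\sqrt{N}$. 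That remainder is harmless on $[0,T^*]$ because the $L^2$ bound $\omega_N$ from Proposition~2.1 keeps $(W^N_s)^2/\sqrt N$ small.

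The heart of the argument is the strong approximation of the martingale $M_n/\sqrt{N}$. The Hellinger-Lipschitz hypothesis $\sum_k k^2|p(x,x+k)^{1/2} - p(y,y+k)^{1/2}|^2 \le M|x-y|^2$ makes $x \mapsto \psi(x)$ Lipschitz, so the predictable quadratic variation $\sum_{i<[tN]}\psi_N(Z_i)/N$ converges uniformly to $\beta(t) = \int_0^t \psi(z(s))\, ds$ with error $O(1/\sqrt N)$ thanks again to the fluid bound. I would then invoke the Kurtz strong-approximation theorem in the form sharpened by Berkes et al., which on an enlarged probability space produces a standard Brownian motion $B_1$ satisfying
\begin{equation}
\sup_{t \le T}\Bigl|\frac{M_{[tN]}}{\sqrt{N}} - \int_0^t \sqrt{\psi(z(s))}\, dB_1(s)\Bigr| = O\Bigl(\frac{\log N}{\sqrt{N}}\Bigr)
\end{equation}
in $L^1$; here the summability $\sum_k k\epsilon_k^{1/2} < \infty$ supplies exactly the moment control on the jumps that the KMT-style embedding needs in order to deliver the logarithmic rate.

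Combining the two steps, $W^N_t$ satisfies an approximate linear integral equation driven by $\int_0^t \sqrt{\beta'(s)}\, dB_1(s)$, with a total perturbation of size $O(\log N/\sqrt N)$ in $L^1$-sup. The limit $W_t$ solves the same linear equation exactly with Lipschitz coefficients, so pathwise uniqueness is standard; subtracting the two integral identities, taking the supremum, and applying Grönwall's inequality with the bounded coefficient $\gamma'(z(\cdot))$ yields
\begin{equation}
\mathbb{E}\Bigl[\sup_{t \le T} |W^N_t - W_t|\Bigr] \le \exp\bigl(\|\gamma'\|_\infty T\bigr)\cdot O\Bigl(\frac{\log N}{\sqrt N}\Bigr),
\end{equation}
which is the claimed bound and, in particular, implies convergence in distribution.

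The main obstacle is the strong-approximation step: verifying that the discrete-time, state-dependent martingale $(M_n)$ with transition kernel $p_N(\cdot, Z_{n-1})$ fits the framework of Kurtz and Berkes et al. with the correct state-dependent limiting variance $\psi(z(t))$, and that the $\sum_k k\epsilon_k^{1/2} < \infty$ condition is both necessary and sufficient to extract the logarithmic rate rather than a weaker polynomial one. Once the strong approximation is in place, the Taylor expansion, discretisation control, and Grönwall closure are routine.
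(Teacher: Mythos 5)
You have the right ingredients (Kurtz's framework plus a KMT-type strong approximation in the random-walk form of Berkes et al.) and the right outer architecture (linearise the drift around $z(t)$, strongly approximate the noise, close with Gr\"onwall), but the step you yourself flag as ``the main obstacle'' is precisely where the argument is incomplete, and the direct route you propose does not work. The KMT embedding delivers its logarithmic rate for partial sums of \emph{i.i.d.} increments; the compensated martingale $M_n$ has state-dependent increments with law $p_N(\cdot, Z_{n-1})$, and martingale-level strong approximation theorems do not give a $\log N/\sqrt{N}$ rate --- they typically give polynomial rates. Asserting that $\sum_k k\epsilon_k^{1/2}<\infty$ ``supplies exactly the moment control the KMT-style embedding needs'' does not bridge this: no version of KMT applies to $M_n/\sqrt{N}$ as a single object.

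The paper's proof resolves this exactly where your proposal stops: following Kurtz, the chain is rewritten via a random time change, decomposed \emph{by jump size}, as
\begin{equation}
\tilde Z^N_t = \frac{1}{N}\sum_{l\le N} l\, B_l\Bigl(N\int_0^t p_N(l,\tilde Z^N_s)\,ds\Bigr),
\end{equation}
where for each fixed $l$ the underlying counting process is a \emph{standard} random walk with i.i.d.\ increments evaluated at a trajectory-dependent time. The i.i.d.\ KMT coupling (in the Berkes et al.\ random-walk form) is applied to each of these standard walks separately, and the condition $\sum_k k\epsilon_k^{1/2}<\infty$ is there to sum the resulting coupling errors over the jump sizes $k$ --- not to control moments of a single martingale. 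Likewise, the Hellinger--Lipschitz hypothesis $\sum_k k^2|p(x,x+k)^{1/2}-p(y,y+k)^{1/2}|^2\le M|x-y|^2$ is used to control the dependence of the per-jump-size time changes on the state, not merely to make $\psi$ Lipschitz as in your sketch. Without this jump-size decomposition your strong-approximation step, and hence the claimed $\log N/\sqrt{N}$ bound, is unsupported.
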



\subsection{{LLN and CLT for the hitting time}}
\label{sec:LLN_and_CLT_for_the_hitting_time}

The exploration algorithm finishes at time
\begin{equation}
T_N^* = \inf\{ \tau \in \mathbb{N}_+ | Z_\tau = N \} \leq N < \infty,
\end{equation}
which is a hitting time for the Markov process. Since the algorithm adds precisely one node at each step, we have that the final number of active items is exactly $T^*_N$, i.e.\ $A_{T_N^*} = T_N^*$. Because we wish to determine the statistical properties of $A_{T_N^*}$, we will seek not only a first-order approximation for $T^*_N$, but also prove a central limit theorem result as the initial number of items $N$ goes to infinity.

Since the exploration process converges to the fluid limit $z(t)$, we can anticipate that an appropriately scaled hitting time $T_N^*$ converges to $T^*$, the solution to $z(T^*) = 1$. This intuition is formalized in the \GLS{LLN} result for $T_N^*/N$ in \refProposition{prop:Bound_on_L2_norm_of_TNstar_minus_Tstar}. Its proof is in \refAppendixSection{appendix:Proof_of_proposition_Bound_on_L2_norm_of_TNstar_minus_Tstar}.

\begin{prop}
\label{prop:Bound_on_L2_norm_of_TNstar_minus_Tstar}
For all $\delta > 0$,
\begin{equation}
\probabilityBig{ \Bigl| \frac{T^*_N}{N} - T^* \Bigr| \ge \delta }
\le \frac{2\omega_N}{\delta}.
\end{equation}
Moreover if $\gamma$ is continuous, non-increasing with $\gamma(1) = 0$, then there exists a constant $C$ for sufficiently small $\delta$ so that
\begin{equation}
\Big\lVert \frac{T^*_N}{N} - T^* \Big\rVert_2
\leq C \omega_N := \Omega_N
\end{equation}
\end{prop}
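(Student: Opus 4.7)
The plan is to transfer the uniform $L^2$-control on $Z^N_t - z(t)$ supplied by \refProposition{prop:LLN_comparing_Znt_and_Zt} to an analogous control on the rescaled hitting time $T^*_N/N$. The crux is the deterministic pathwise inequality
$$
\Bigl| \frac{T^*_N}{N} - T^* \Bigr| \;\leq\; \sup_{t \in [0, T^*]} |Z^N_t - z(t)|,
$$
from which both claims follow at once.

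To prove the pathwise inequality I would examine the two directions of deviation separately. Suppose first that $T^*_N/N \geq T^* + \delta$ for some $\delta > 0$. Then at least $T^*_N - [T^*N] \geq \delta N$ exploration steps separate $[T^*N]$ from $T^*_N$, and since each step increments $Z_n$ by $1 + \xi_n \geq 1$ deterministically, this forces $Z_{[T^*N]} \leq (1-\delta)N$, so $Z^N_{T^*} \leq 1 - \delta = z(T^*) - \delta$ and hence $|Z^N_{T^*} - z(T^*)| \geq \delta$. Suppose conversely that $T^*_N/N \leq T^* - \delta$. Because $N$ is absorbing, $Z^N_t = 1$ already for $t \geq T^*_N/N$, so in particular $Z^N_{T^*-\delta} = 1$. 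Under the extra hypothesis that $\gamma$ is non-increasing with $\gamma(1) = 0$, one has $\gamma \geq 0$ on $[0,1]$, whence $\dot{z}(s) = 1 + \gamma(z(s)) \geq 1$ on $[0, T^*]$ and thus $z(T^*-\delta) \leq 1 - \delta$, yielding $|Z^N_{T^*-\delta} - z(T^*-\delta)| \geq \delta$. Combining the two cases, the event $\{|T^*_N/N - T^*| > \delta\}$ forces $\sup_{t\in[0,T^*]}|Z^N_t - z(t)| \geq \delta$ for every $\delta > 0$, which is exactly the contrapositive of the desired pathwise bound.

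With the pathwise bound in hand, the concentration assertion follows by applying Markov's inequality in each direction. The probabilities $\mathbb{P}[T^*_N/N \geq T^* + \delta]$ and $\mathbb{P}[T^*_N/N \leq T^* - \delta]$ are each bounded by $\E|Z^N_{t_\pm} - z(t_\pm)|/\delta$, and \refProposition{prop:LLN_comparing_Znt_and_Zt} at $p = 2$ (together with $\|\cdot\|_1 \leq \|\cdot\|_2$) controls each expectation by $\omega_N$; summing yields the stated $2\omega_N/\delta$. The $L^2$ estimate follows still more directly by taking $L^2$-norms of the pathwise inequality and invoking \refProposition{prop:LLN_comparing_Znt_and_Zt} at $p = 2$, $T = T^*$, which gives $\|T^*_N/N - T^*\|_2 \leq \omega_N$. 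Any fractional-part corrections of order $1/N$ are absorbed by the $(1+\bar{\gamma}_N)/N$ summand already inside $\omega_N$, so that $C = 1$ (or any constant slightly larger) suffices.

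The main technical point is the lower bound $\dot{z} \geq 1$ on $[0, T^*]$ used in the second direction; this is precisely what the non-increasing assumption on $\gamma$ combined with $\gamma(1) = 0$ is designed to guarantee, which explains why this extra assumption appears only in the second half of the proposition. Were this lower bound unavailable, $z$ could in principle approach the boundary value $1$ with arbitrarily small slope, and the constant $C$ would have to be enlarged (roughly, to $1/\dot{z}(T^*)$) via a local linearization near $T^*$; the monotone-coupling skeleton of the argument would, however, be unchanged.
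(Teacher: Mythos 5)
Your argument is correct, and for the first claim it is essentially the paper's own argument: a deterministic implication from closeness of $Z^N$ to $z$ to closeness of the hitting times, followed by Markov's inequality and the $L^2$ bound of \refProposition{prop:LLN_comparing_Znt_and_Zt}. For the second claim, however, you take a genuinely different and substantially shorter route. The paper does \emph{not} simply square its own pathwise implication; instead it starts from the Dynkin/martingale identity $T^*_N/N - T^* = \int_0^{T^*}\gamma(z(s))\,\d{s} - \int_0^{T^*_N/N}\gamma_N(Z_{sN})\,\d{s} - M^N_{T^*_N/N}$, and then performs a delicate case analysis near $T^*$: using continuity of $\gamma$ and $\gamma(1)=0$ it finds $C_1<T^*$ and $\varepsilon>0$ with $\gamma(z(s))\le 1-\varepsilon$ for $s\ge C_1$, which lets it absorb a term $(1-\varepsilon)\,|T^*_N/N - T^*|$ back into the left-hand side and produces a constant $C$ depending on $\varepsilon$, $\delta$ and the Lipschitz constant. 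Your pathwise inequality $|T^*_N/N - T^*|\le \sup_{t\le T^*}|Z^N_t - z(t)|$ short-circuits all of this: the two-case verification is sound (Case 1 uses only that $Z_n$ increases by at least one per step before absorption, Case 2 only that $\dot z\ge 1$ and that $N$ is absorbing), and taking $L^2$ norms gives the bound with $C=1$. What the paper's longer route buys is the decomposition \refEquation{eq:Tmart} itself, which is reused verbatim as the starting point of the CLT proof in \refProposition{prop:CLT_of_Tstar}; your route buys an explicit constant and a proof that does not need the Lipschitz/approximation machinery a second time.

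One small point worth flagging: your pathwise inequality invokes $\dot z\ge 1$ (i.e.\ $\gamma\ge 0$ on $[0,1]$) in Case 2, yet it underlies \emph{both} claims, while the hypothesis ``$\gamma$ non-increasing with $\gamma(1)=0$'' is only attached to the second. This is not a real gap --- $\gamma_N\ge 0$ always, so $\gamma\ge 0$ wherever it arises as a limit, and the paper's proof of the first claim leans on exactly the same fact --- but it would be cleaner to state $\gamma\ge 0$ as a standing assumption rather than deriving it from the monotonicity hypothesis of the second claim.
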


\begin{coro}
If the distribution of the number of neighbors is such that $\delta_N \to 0$ as $N \to \infty$, $\bar{\gamma}_N = \smallO{N}$, and $\bar{\psi}_N = \smallO{N}$, then the proportion of active items $T_N^* / N$ converges in $L^1$ to $T^*$.
\end{coro}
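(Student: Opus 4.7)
The plan is to combine the Markov-type probability bound in \refProposition{prop:Bound_on_L2_norm_of_TNstar_minus_Tstar} with a trivial uniform almost-sure bound on $|T_N^*/N - T^*|$ and then invoke the bounded convergence theorem. First I would verify that $\omega_N \to 0$. Fixing any $T > T^*$, the quantity
\[ \omega_N = \Bigl( \delta_N T + \frac{1 + \bar{\gamma}_N}{N} + 2 \sqrt{\frac{\bar{\psi}_N T}{N}} \Bigr) \exp(C_L T) \]
from \refProposition{prop:LLN_comparing_Znt_and_Zt} has each of its three parenthesized contributions vanishing as $N \to \infty$ by the hypotheses $\delta_N \to 0$, $\bar{\gamma}_N = \smallO{N}$, and $\bar{\psi}_N = \smallO{N}$, while $\exp(C_L T)$ is a fixed constant. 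Substituting into $\probability{|T_N^*/N - T^*| \ge \delta} \le 2\omega_N/\delta$ — the first inequality of \refProposition{prop:Bound_on_L2_norm_of_TNstar_minus_Tstar}, which holds without any further monotonicity hypothesis on $\gamma$ — then yields convergence of $T_N^*/N$ to $T^*$ in probability.

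Next I would establish uniform boundedness. By construction $T_N^* \le N$, so $T_N^*/N \in [0,1]$ almost surely. On the deterministic side, $\gamma_N$ is an expected count of neighbors and hence nonnegative, and by the uniform convergence in \refEquation{eq:def_gamma} this nonnegativity is inherited by the Lipschitz limit $\gamma$; consequently $\dot z(t) = 1 + \gamma(z(t)) \ge 1$, which gives $z(t) \ge t$ and in particular $T^* \le 1$. Therefore $|T_N^*/N - T^*| \le 1$ almost surely for every $N$, providing a uniform, deterministic dominator.

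Convergence in probability paired with this uniform a.s. bound yields $L^1$ convergence by the bounded convergence theorem (equivalently, by uniform integrability of a uniformly bounded sequence). The only subtle point to get right is to resist the temptation to deduce the corollary directly from the stronger $L^2$ estimate of \refProposition{prop:Bound_on_L2_norm_of_TNstar_minus_Tstar}, since that estimate requires $\gamma$ to be continuous, non-increasing and vanishing at $1$, which is not assumed here; the weaker Markov-type probability bound combined with bounded convergence is precisely what is available under the corollary's hypotheses, and it is exactly enough to conclude.
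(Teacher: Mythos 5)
Your proposal is correct. The paper gives no explicit proof of this corollary, and the most natural reading is that it is meant to follow the same pattern as \refCorollary{coro.fluid.disc}: under the stated hypotheses $\omega_N\to 0$, hence $\Omega_N = C\omega_N \to 0$, and the $L^2$ bound of \refProposition{prop:Bound_on_L2_norm_of_TNstar_minus_Tstar} gives $L^2$ and therefore $L^1$ convergence. Your route is genuinely different and, as you correctly observe, better matched to the corollary's stated hypotheses: the $L^2$ estimate is only proved under the additional assumption that $\gamma$ is continuous, non-increasing with $\gamma(1)=0$, whereas the Markov-type probability bound holds without it. Combining that bound (which yields convergence in probability once $\omega_N\to 0$) with the deterministic dominator $|T_N^*/N - T^*|\le 1$ — justified by $T_N^*\le N$ and by $\gamma\ge 0$ forcing $\dot z\ge 1$, hence $T^*\le 1$ — and the bounded convergence theorem is a complete and self-contained argument. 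What the paper's implicit route buys is a quantitative rate ($\Omega_N$) at the price of extra structure on $\gamma$; what your route buys is the $L^1$ conclusion under exactly the hypotheses the corollary states. One minor point worth making explicit if you write this up: the nonnegativity of $\gamma$ is inherited from $\gamma_N\ge 0$ via \refEquation{eq:def_gamma} only on the range of the arguments $x/N$, so you should invoke the continuity of $\gamma$ (it is Lipschitz by assumption) to extend $\gamma\ge 0$ to all of $[0,1]$, which you do correctly.
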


The \gls{LLN} in \refProposition{prop:Bound_on_L2_norm_of_TNstar_minus_Tstar} provides us formally with a candidate, $T^*$, around which to center $T_N^* / N$ and subsequently prove the \gls{CLT} result in \refProposition{prop:CLT_of_Tstar}. We defer to \refAppendixSection{appendix:Proof_of_proposition_CLT_of_Tstar} for its proof.

\begin{prop}
\label{prop:CLT_of_Tstar}
There exist constants $C_1, C_2$ such that the expectation of the random variable $\sqrt{N} ( T_N^* / N - T^* )$ centered around $- W_T^*$ is bounded by
\begin{align}
&
\expectationBig{ \Bigl| \sqrt{N} \Bigl( \frac{T_N^*}{N} - T^* \Bigr) + W_{T^*} \Bigr| }
\\ &
\leq C_1 \omega_N^2 \sqrt{N} + \Bigl( \bar{\psi}_N \Omega_N + \frac{\bar{\psi}_N}{N} \Bigr)^{\frac{1}{2}} + C_2 \frac{\log(N)}{\sqrt{N}} + \frac{1 + \bar{\gamma}_N}{\sqrt{N}}.
\nonumber
\end{align}
Moreover, if the distribution of the number of neighbors is such that $\delta_N = \smallO{ 1 / \sqrt{N} }$, $\bar{\gamma}_N = \smallO{ \sqrt{N} }$, and $\bar{\psi}_N = \smallO{ N^{1/4} }$, then
 $\sqrt{N} ( T_N^* / N - T^* )$ converges in $L^1$ to $W_{T^*}$ that is a centered Gaussian random variable with variance
\begin{equation}
\sigma^2
= m(T^*)
= \expectation{ W_{T^*}^2 },
\end{equation}
and where $m(t) = \expectation{ W_t^2 }$ solves the differential equation
\begin{equation}
\dot{m}(t)
= -2 \dot{\gamma}( z(t) ) m(t) + \dot{\beta}(t),
\quad
\textrm{with}
\quad
m(0) = 0.
\label{eq:ode_var_er_disc}
\end{equation}
\end{prop}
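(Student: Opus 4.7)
The plan is to linearize the terminal condition $Z_{T_N^*} = N$ around the fluid exit time $T^*$. Dividing by $N$ this reads $Z^N_{T_N^*/N} = 1 = z(T^*)$, so by the definition of $W^N$,
\begin{equation}
\tfrac{1}{\sqrt{N}}\,W^N_{T_N^*/N}
= Z^N_{T_N^*/N} - z(T_N^*/N)
= z(T^*) - z(T_N^*/N).
\end{equation}
Because $\gamma(1)=0$ (inherited from Proposition \ref{prop:Bound_on_L2_norm_of_TNstar_minus_Tstar}), $\dot z(T^*) = 1+\gamma(1) = 1$, and the boundedness of $\gamma''$ yields the second-order Taylor expansion
\begin{equation}
z(T^*) - z(T_N^*/N) = -(T_N^*/N - T^*) + R_N, \qquad |R_N| \le C\,(T_N^*/N - T^*)^2.
\end{equation}
Rearranging and multiplying by $\sqrt{N}$ gives the master identity
\begin{equation}
\sqrt{N}\bigl(T_N^*/N - T^*\bigr) + W_{T^*}
= -\bigl(W^N_{T_N^*/N} - W_{T^*}\bigr) + \sqrt{N}\,R_N.
\end{equation}

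I would then bound the right-hand side term by term. The Taylor remainder contributes $\mathbb{E}|\sqrt{N}R_N| \le C\sqrt{N}\,\lVert T_N^*/N - T^*\rVert_2^2 \le C\Omega_N^2\sqrt{N}$ via Proposition \ref{prop:Bound_on_L2_norm_of_TNstar_minus_Tstar}, which accounts for the first summand of the claimed bound. For the fluctuation term I split
\begin{equation}
W^N_{T_N^*/N} - W_{T^*}
= \bigl(W^N_{T_N^*/N} - W^N_{T^*}\bigr) + \bigl(W^N_{T^*} - W_{T^*}\bigr).
\end{equation}
The second piece is controlled in $L^1$ directly by Proposition \ref{prop:Diffusion_approximation_with_error_bound}, giving $\mathbb{E}\sup_{t\le T}|W^N_t - W_t| \le C\log(N)/\sqrt{N}$, the $\log(N)/\sqrt{N}$ term in the bound. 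The first piece measures the increment of $W^N$ between a deterministic and a nearby random time; the martingale-plus-drift decomposition of $W^N$ (with instantaneous quadratic variation of order $\bar\psi_N$) yields a modulus of continuity of the form $\mathbb{E}(W^N_t - W^N_s)^2 \le C\bar\psi_N|t-s| + C(1 + \bar\gamma_N)^2/N$. Cauchy--Schwarz combined with $\lVert T_N^*/N - T^*\rVert_2 \le \Omega_N$ then produces the $(\bar\psi_N\Omega_N + \bar\psi_N/N)^{1/2}$ and $(1 + \bar\gamma_N)/\sqrt{N}$ contributions.

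The $L^1$ convergence $\sqrt{N}(T_N^*/N - T^*) \to -W_{T^*}$ follows immediately upon substituting the growth conditions $\delta_N = o(1/\sqrt{N})$, $\bar\gamma_N = o(\sqrt{N})$, $\bar\psi_N = o(N^{1/4})$, which are calibrated so that each of the four error terms vanishes. Gaussianity of $W_{T^*}$ is automatic because $W$ solves a linear SDE with deterministic coefficients driven by Brownian motion; its variance $m(t) = \mathbb{E}[W_t^2]$ satisfies the linear ODE \refEquation{eq:ode_var_er_disc}, obtained by applying Itô's formula to $W_t^2$ and taking expectations.

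The main technical obstacle I anticipate is the evaluation of $W^N$ at the random time $T_N^*/N$: because $W^N$ has jumps of size up to $\bar\gamma_N/\sqrt{N}$ whenever a high-degree vertex is selected, one cannot use crude $L^\infty$ increment bounds and must work with the full drift/martingale decomposition to extract an $|t-s|$-continuity modulus with the correct $\bar\psi_N$ prefactor. This in turn is precisely why the refined $L^2$ control of $T_N^*/N - T^*$ from Proposition \ref{prop:Bound_on_L2_norm_of_TNstar_minus_Tstar} is needed rather than only the $L^1$ rate, and why the condition $\gamma(1) = 0$ (forcing $\dot z(T^*) = 1$) is indispensable, as it both makes the linearization nondegenerate and yields the centering $-W_{T^*}$ without a $1/\dot z(T^*)$ renormalization.
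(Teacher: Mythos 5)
Your proof is correct and is essentially the paper's own argument in a different packaging: linearizing the terminal condition $Z^N_{T_N^*/N}=1=z(T^*)$ and splitting $W^N_{T_N^*/N}-W_{T^*}$ into a random-time increment of $W^N$ plus $W^N_{T^*}-W_{T^*}$ reproduces exactly the paper's terms Ia, Ib and II, built from the same four ingredients (a Taylor expansion at $T^*$ exploiting $\gamma(1)=0$, the $L^2$ control $\Omega_N$ of the hitting time from \refProposition{prop:Bound_on_L2_norm_of_TNstar_minus_Tstar}, the martingale quadratic-variation bound over the random interval, and the Koml\'{o}s--Major--Tusn\'{a}dy estimate of \refProposition{prop:Diffusion_approximation_with_error_bound}). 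The only imprecision is that your stated modulus of continuity for $W^N$ omits the drift-difference contribution of order $\sqrt{N}\,(\delta_N+C_L\,\omega_N)\,|t-s|$, which at $|t-s|\sim\Omega_N$ produces an extra term of order $\omega_N^2\sqrt{N}$ --- harmless, since it is absorbed into the constant $C_1$ in the bound $C_1\omega_N^2\sqrt{N}$.
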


\subsection{Case: \glsentrytext{ER} random graph}

We now illustrate our results through an application of Propositions~\ref{prop:LLN_comparing_Znt_and_Zt}, \ref{prop:CLT_of_Tstar} to the case of the \gls{ER} random graph. We point interested readers to \cite{dhara_solvable_2016} for the analysis of another, more involved example of an exploration on a graph with sufficient symmetry.

Suppose that given $N$ the graph $\G = \G(N, c/N)$ is a sparse \gls{ER} graph, i.e.\ $p_N(\cdot,x)$ is the probability mass function of the binomial distribution $\textrm{Bin}(N-x-1, c/N)$ with $c > 0$.
Additionally, suppose that $N-1$ is Poisson distributed with parameter $h$. The mean and variance of $p_N(.,x)$ are then given by
\begin{equation}
\gamma_N(x)
= (N-x-1) \frac{c}{N},
\quad
\psi_N(x)
= (N-x-1) \frac{c}{N} \Bigl( 1- \frac{c}{N} \Bigr).
\end{equation}

Let $\gamma(x) = c(1-x)$. Condition \refEquation{eq:def_gamma} is then satisfied with $\delta_N = c / N$, and as Lipschitz constant $C_L=c$ suffices. Moreover, $\bar{\gamma}_N, \bar{\psi}_N \le c$. The deterministic differential equation in \refEquation{eq:ODE.disc} reads
\begin{equation}
\dot{z}(t) 
= 1 + c( 1-z(t) )
= (1+c) - c z(t), 
\quad
\textrm{with}
\quad z(0)=0.
\label{eq:ODE.er}
\end{equation}
This differential system can be explicitly solved, giving
\begin{equation}
z(t)
= \frac{1+c}{c} \Bigl( 1 -  \e{- c t}\Bigr).
\end{equation}
Observe that $\underset{t\to\infty}{\lim} z(t)= (1+c)/c > 1$, implying the existence of a finite, constant solution $T^*$ to $z(T^*) = 1$. Solving this equation, we find that
\begin{equation}
T^* 
= \frac{ \ln{(1+c)} }{c},
\end{equation}
which agrees with literature \cite{mcdiarmid}.

We next calculate $T^*$'s variance using \refProposition{prop:CLT_of_Tstar}. First, we verify its assumptions. Relations between the binomial coefficients and the Poisson distribution are well studied.
Defining
\begin{equation}
p_N(k,[xN])
= \binom{N-[Nx]-1}{k} \Bigl( \frac{c}{N} \Bigr)^k \Bigl( 1-\frac {c}{N} \Bigr)^{N-[Nx]-1},
\end{equation}
and using (for instance) the Stein--Chen method \cite{teerapabolaan_bound_2013}, we have that
\begin{equation}
|p_N(k,[xN])-p(k,x)|
\leq \frac{c}{N} p(k,x),
\end{equation}
which shows that the assumptions of Proposition~\ref{prop:Diffusion_approximation_with_error_bound} are satisfied. Moreover, the differential equation for $\beta(t)$ is given by
\begin{equation}
\dot{\beta}(t)
= \psi(z(s))=(1+c) \e{-ct} - 1,
\end{equation}
and the solution to \refEquation{eq:ode_var_er_disc} is then
\begin{equation}
m(t)
= \e{ - 2 ct } ( 1 - \e{ c t } )( \e{ ct } - 2c - 1 ) \frac{1}{2c},
\end{equation}
ultimately leading to
\begin{align}
\sigma^2
= \frac{ m(T^*) }{ ( 1-\gamma(1) )^2 }
= \frac{c}{2(c+1)^2}.
\end{align}

%
%
%
%

\section{Random Geometric Graph}\label{sec:RGG}

In this section we consider the problem of \gls{RSA} \cite{evans_random_1993}, which can be described as an exploration process on a \gls{RGG}. Due to the strong spatial correlation between points in \gls{RSA}, a quantitative analysis is notoriously difficult. We propose instead to use the fluid limits discussed in \refSection{sec:Random_adsorption_under_a_homogeneous_relation} as a stepping stone to obtain trajectorial bounds on the actual exploration process. As a consequence, we obtain a lower and upper bound for the jamming constant.

\subsection{\Gls{RSA} as an exploration process $\process{Z_n}{n \geq 0}$ on a \glsentrytext{RGG}}

Let $\Phi$ be a homogeneous Poisson point process with intensity $\lambda>0$ in a finite box $\mathcal{C}\subset \mathbb{R}^d$. We consider a \gls{RGG} $\mathcal{G}$ in which we identify each vertex with a point of $\Phi$, and there is an edge between every two points $X_i, X_j \in \Phi$ within distance $r>0$ of each other, i.e.\ $| X_i-X_j|\leq r$. The points $X_i$ and $X_j$ are then considered neighbors. Recall that given the number of points $N =\Phi(\mathcal{C})$ in $\mathcal{C}$, the points $X_1, X_2, \dots, X_N$ are independently and uniformly at random distributed in $\mathcal{C}$.

Let $Z_n$ denote the number of points (vertices of $\G$) explored at step $n$, and set $Z_0 = 0$. Consider any sequence of sets $\{ S_1, S_2, \ldots \}$ defined on $\mathcal{C}$. Let $E_n = \cup_{k\leq n} S_k$ denote the union of these sets until step $n$. Each $k$-th set $S_k$ corresponds to the new area of $\mathcal{C}$ being explored at step $k$, while $E_n$ corresponds to the total explored area up to and including step $n$.

In particular, for \gls{RSA} at each $n$-th step of the process, we choose a point $X_{i_n}$ uniformly at random among all non-explored points, and next consider $X_{i_n}$ and all of its neighbors in $S_n = B^*(X_{i_n}, r) \cap E_{n-1}^c$ explored. Here, $B^*(x, r) = B(x,r) \setminus \{ x \}$ denotes a ball of radius $r$ centered around $x$ but excluding its center $x$. The number of explored vertices up to time $n$,  $Z_n$, satisfies the stochastic recursion:
\begin{equation}
Z_{n} = 1 + Z_{n-1} + \Phi(S_n).
\label{eq:def_Z}
\end{equation}
Note that $Z_n$ thus always increases at least by one. The exploration process ends at the first time $T^*_N$ such that $Z_{T^*_N} = N$, i.e.\ when all points of $\G$ have been explored.

The process $Z_n$ is a discrete Markov process with respect to the filtration
\begin{equation}
\mathcal{F}_n 
= \sigma( \cup_{i \leq n} \{ Z_i, S_i \} ),
\end{equation}
and lives on a finite state space $\{ 0, 1, \dots, N\}$ with absorbing state $N$.
However the computation of its scaling limit and of the stopping time $T^*_N$ are prohibitively complicated,
because the need to track the history of the explored volume impedes a direct analysis of its drift.
We therefore instead resort to deriving upper and lower bounds for $Z_n$ and $T^*_N$.

\subsection{Fluid limit properties of $\process{Z_n}{n \geq 0}$}

In the following proposition, we state the fluid limit convergence result for the exploration process as proven by Penrose et al. \cite{penrose2002}. From this fundamental result, we derive a similar convergence result for the fraction of explored volume. Let $Z^N_t := Z_{[tN]} / N$ denote the scaled version of the exploration process $Z_n$ in \refEquation{eq:def_Z}.

\begin{prop}
\label{prop:Penroses_fluid_limit_on_RSA_and_our_volume_fluid_limit}
There exists a deterministic function $z : \mathbb R^+ \to [0,1]$ such that
\begin{equation}
Z^N_t \underset{N\to \infty}{\to} z(t)
\text{ almost surely, and in }
\cal L^1.
\label{eqn:Fluid_limit_for_ZNt_by_Penrose}
\end{equation}
As a consequence there also exists a function $\eta : \mathbb R^+ \to [0,1]$ such that
\begin{equation}
\frac{1}{|\mathcal{C}|} \sum_{i =1}^{[tN]} \cardinality{S_i}\underset{N\to \infty}{\to} \eta(t)
\text{ almost surely, and in }
\cal L^1.
\label{eqn:Fraction_of_explored_volume_fluid_limit}
\end{equation}
\end{prop}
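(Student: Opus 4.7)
The first convergence is Penrose's classical result \cite{penrose2002}, which I would use as a black box; only the volume statement needs work. My plan is to combine an exact deterministic identity between the explored volume and the explored vertex count with a Poisson concentration argument, so that the volume fluid limit follows mechanically from the vertex fluid limit \refEquation{eqn:Fluid_limit_for_ZNt_by_Penrose}, with $\eta(t) = z(t) - t$.

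First I would iterate the recursion \refEquation{eq:def_Z} and use that the sets $S_k \subset E_{k-1}^c$ are pairwise disjoint to obtain the exact identity
\[
Z_n = n + \sum_{k=1}^{n} \Phi(S_k) = n + \Phi(E_n), \qquad |E_n| = \sum_{k=1}^{n}|S_k|.
\]
This already rewrites the left-hand side of \refEquation{eqn:Fraction_of_explored_volume_fluid_limit} as $|E_{[tN]}|/|\mathcal{C}|$, and reduces the problem to establishing the fluid limit of $|E_{[tN]}|/|\mathcal{C}|$.

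Next I would show that $\Phi(E_n)$ concentrates around $\lambda|E_n|$. The sequence
\[
M_n := \Phi(E_n) - \lambda|E_n| = \sum_{k=1}^n \bigl( \Phi(S_k) - \lambda|S_k| \bigr)
\]
should be a martingale with respect to the filtration that additionally remembers the selection $X_{i_k}$. The key input is the stopping-set / spatial Markov property of Poisson processes combined with Slivnyak's theorem applied at $X_{i_k}$: conditionally on $\mathcal{F}_{k-1}$ and $X_{i_k}$, the restriction of $\Phi$ to the unexplored region is still Poisson with intensity $\lambda$, so $\Phi(S_k)$ is Poisson$(\lambda|S_k|)$ with variance bounded by $\lambda v_d r^d$, where $v_d$ denotes the volume of the unit $d$-ball. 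Summing yields $\expectation{ M_n^2 } \leq \lambda v_d r^d \, n$, and for $n = [tN]$ one obtains $M_{[tN]}/N = \bigO{1/\sqrt{N}}$ in $L^2$.

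Combining the two steps gives
\[
\frac{1}{|\mathcal{C}|}\sum_{i=1}^{[tN]} |S_i| = \frac{Z_{[tN]} - [tN]}{\lambda|\mathcal{C}|} - \frac{M_{[tN]}}{\lambda|\mathcal{C}|},
\]
and since $N/(\lambda|\mathcal{C}|) \to 1$ by the law of large numbers for Poisson counts, substituting \refEquation{eqn:Fluid_limit_for_ZNt_by_Penrose} delivers $\eta(t) = z(t) - t$ both almost surely and in $L^1$. The main obstacle will be the rigorous justification of the conditional Poisson statement: the stopping set $E_{k-1}$ is itself a random set built from $\Phi$ through the non-trivial selection rule, so identifying precisely which $\sigma$-algebra $M_n$ is a martingale for, and checking the hypotheses of the Slivnyak / stopping-set results, is where the technical care lies.
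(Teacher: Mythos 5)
The step that fails is your central claim that, conditionally on the history, the unexplored points form a Poisson process of intensity $\lambda$ on the unexplored region, so that $\Phi(S_k)$ is conditionally Poisson with mean $\lambda\cardinality{S_k}$ and $M_n=\Phi(E_n)-\lambda\cardinality{E_n}$ is a martingale. Your own identity $Z_n=n+\Phi(E_n)$ already refutes this: the number of unexplored points after step $n$ is $N-Z_n$, whereas a Poisson($\lambda$) configuration on the unexplored region would carry $\approx\lambda(\cardinality{\mathcal{C}}-\cardinality{E_n})$ points, and the gap between the two is $Z_n-\lambda\cardinality{E_n}=n+M_n$. So if $M_n$ were indeed centred, the unexplored configuration would be depleted by $n\sim tN$ points relative to intensity $\lambda$ --- a macroscopic deficit that contradicts the very Poisson property you need at step $n+1$. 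The mechanism is that each step consumes one point (the selected centre) while consuming zero volume. The correct conditional law, which the paper uses in \refEquation{eqn:Deconstruction_of_Zn_into_Binomials}, is $\mathrm{Bin}\bigl(N-Z_{k-1}-1,\ \cardinality{S_k}/(\cardinality{\mathcal{C}}-\sum_{j<k}\cardinality{S_j})\bigr)$, with conditional mean $\approx\lambda\tfrac{1-z}{1-\eta}\cardinality{S_k}$ rather than $\lambda\cardinality{S_k}$; the discrepancy accumulates to a drift of order $n$ in $M_n$, so $M_{[tN]}/N$ does not vanish. Accordingly the conclusion $\eta(t)=z(t)-t$ is false: substituting $\dot\eta=\dot z-1$ into the relation $\dot z=1+(1-z)\dot\eta/(1-\eta)$ of \refProposition{prop:Fundamental_relation_between_the_fraction_of_explored_volume_and_the_limiting_fraction_of_explored_points} forces $(\dot z-1)\,t/(1-z+t)=0$, i.e.\ $\dot\eta\equiv 0$ on $(0,T^*)$, which is absurd.

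Beyond this error, note that your overall strategy is not the paper's. The paper does not deduce \refEquation{eqn:Fraction_of_explored_volume_fluid_limit} from \refEquation{eqn:Fluid_limit_for_ZNt_by_Penrose}; it applies the Penrose--Yukich stabilization theorem \cite[Thm.~3.2]{penrose_random_2003} directly to the functional $t\mapsto(1/\cardinality{\mathcal{C}})\sum_{i\le[tN]}\cardinality{S_i}$, verifying translation invariance, stabilization through a percolation-based radius, and polynomial boundedness. The relation between $\eta$ and $z$ is only derived afterwards, in \refProposition{prop:Fundamental_relation_between_the_fraction_of_explored_volume_and_the_limiting_fraction_of_explored_points}, and it is an integral relation, not the closed form $\eta=z-t$. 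Your disjointness observation $\cardinality{E_n}=\sum_{k\le n}\cardinality{S_k}$ and the identity $Z_n=n+\Phi(E_n)$ are correct, and a transfer argument along your lines could in principle be salvaged with the Binomial conditional law above; but that route reproduces the integral relation of \refProposition{prop:Fundamental_relation_between_the_fraction_of_explored_volume_and_the_limiting_fraction_of_explored_points}, whose derivation in the paper itself presupposes that the convergence \refEquation{eqn:Fraction_of_explored_volume_fluid_limit} has already been established by an independent argument.
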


\begin{proof}
The first result, \refEquation{eqn:Fluid_limit_for_ZNt_by_Penrose}, was proved in \cite{penrose_random_2001,penrose_random_2003}. The second claim, \refEquation{eqn:Fraction_of_explored_volume_fluid_limit}, follows using a similar proof methodology. Specifically, we aim to apply  \cite[Thm.~3.2]{penrose_random_2003}, which requires us to verify a few properties of the functional $t \to ( 1 / \cardinality{ \mathcal{C} } ) \sum_{i =1}^{[tN]} \cardinality{S_i}$.

First, observe that $t \to ( 1 / \cardinality{ \mathcal{C} } ) \sum_{i =1}^{[tN]} \cardinality{S_i}$ is translation invariant. Second, using percolation estimates, it is proved in \cite{penrose_random_2003} that that for all possible realizations of the marked point process there exists a (random) radius $R$ such that the exploration process at the origin (i.e.\ the state explored or not of a point placed at the origin) stays unmodified by any change in the realization of the marked point process outside a ball of radius $R$. This implies that the same property holds for the exploration of the volumes associated to the points of the point process. Lastly, note that $t \to ( 1 / \cardinality{ \mathcal{C} } ) \sum_{i =1}^{[tN]} \cardinality{S_i}$ is polynomially bounded since $( 1 / \cardinality{ \mathcal{C} } ) \sum_{i =1}^{[tN]} \cardinality{S_i} \le t \lambda\cardinality{ B(\cdot,r) }$.

Having established these three properties, we are now in position to apply \cite[Thm.~3.2]{penrose_random_2003}, which completes the proof.
\end{proof}

\subsection{Bounding \gls{RSA}'s fluid limit $z(t)$}
\label{sec:Bounding_RSAs_fluid_limit_zt}

Having obtained a fluid limit for the fraction of explored volume, we now prove a differential equation fundamentally relating the limiting fraction of explored volume $\eta(t)$ and the limiting fraction of explored points $z(t)$.

\begin{prop}
\label{prop:Fundamental_relation_between_the_fraction_of_explored_volume_and_the_limiting_fraction_of_explored_points}
For all $t \le T^*$,
\begin{equation}
\dot{z}(t) = 1 + (1-z(t))  \frac{\dot \eta(t)}{1-\eta(t)} 
= 1 + \dot{\eta}(t) \e{-\int_0^t \frac{1}{1-z(s)}ds}.
\label{prop:gamma}
\end{equation}

\end{prop}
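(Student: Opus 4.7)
My plan is to derive the first equality from a one-step conditional-expectation analysis of the discrete dynamics, and then to obtain the second as a short ODE manipulation.

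For the first equality, I would unpack the recursion $Z_n - Z_{n-1} = 1 + \Phi(S_n)$ and take conditional expectations with respect to $\mathcal{F}_{n-1}$. By the Poisson-process structure, the $N - Z_{n-1}$ unexplored points are i.i.d.\ uniform on $E_{n-1}^c$ given $\mathcal{F}_{n-1}$ and their number; the algorithm then chooses $X_{i_n}$ uniformly among them, so conditional on $X_{i_n}$ the remaining $N - Z_{n-1} - 1$ unexplored points are i.i.d.\ uniform on $E_{n-1}^c$ and each independently falls into $S_n = B^*(X_{i_n}, r) \cap E_{n-1}^c$ with probability $|S_n|/|E_{n-1}^c|$. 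This yields
\[
\mathbb{E}[\Phi(S_n) \mid \mathcal{F}_{n-1}] = (N - Z_{n-1} - 1)\, \frac{\mathbb{E}[|S_n| \mid \mathcal{F}_{n-1}]}{|E_{n-1}^c|}.
\]
Summing this identity over $k = 1, \dots, [tN]$, dividing by $N$, and invoking \refProposition{prop:Penroses_fluid_limit_on_RSA_and_our_volume_fluid_limit} --- which provides $(N - Z_{[tN]})/N \to 1 - z(t)$, $|E_{[tN]}|/|\mathcal{C}| \to 1 - \eta(t)$, and the cumulative-volume convergence of $\sum_{k \leq [tN]} |S_k|/|\mathcal{C}|$ to $\eta(t)$ --- gives the integrated form of the claim. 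Differentiating in $t$ then produces
\[
\dot z(t) = 1 + (1 - z(t)) \frac{\dot\eta(t)}{1 - \eta(t)}.
\]

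For the second equality, I would set $g(t) = (1 - z(t))/(1 - \eta(t))$, note $g(0) = 1$, and compute
\[
\frac{\dot g(t)}{g(t)} = -\frac{\dot z(t)}{1 - z(t)} + \frac{\dot\eta(t)}{1 - \eta(t)} = -\frac{1}{1 - z(t)},
\]
where the last step uses the first equality just established. Integrating gives $g(t) = \exp\bigl(-\int_0^t ds/(1 - z(s))\bigr)$, and substituting $(1-z(t))/(1-\eta(t))$ back into the first form immediately yields the second.

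The main obstacle will be making the fluid-limit passage quantitatively precise enough to license differentiation in $t$. The one-step conditional drift involves the fluctuating object $|S_n|$ and a spatial average over $E_{n-1}^c$, and to replace these by their macroscopic limits $\dot\eta(t)$, $1-z(t)$, $1-\eta(t)$ inside a cumulative sum I would first work with the integrated identity $z(t) - t = \int_0^t (1 - z(s)) \dot\eta(s)/(1 - \eta(s))\, ds$, add a martingale-tightness argument on top of the almost-sure convergence in \refProposition{prop:Penroses_fluid_limit_on_RSA_and_our_volume_fluid_limit}, and only then differentiate using regularity of $\eta$ inherited from the renewal-type structure underlying \cite[Thm.~3.2]{penrose_random_2003}.
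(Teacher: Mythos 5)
Your proposal is correct and follows essentially the same route as the paper's proof: the same binomial conditional-drift computation $\expectation{\Phi(S_n)\,|\,\mathcal{F}_{n-1}} = (N-Z_{n-1}-1)\,\expectation{\cardinality{S_n}\,|\,\mathcal{F}_{n-1}}/\cardinality{E_{n-1}^c}$, summation and passage to the Riemann--Stieltjes integral via \refProposition{prop:Penroses_fluid_limit_on_RSA_and_our_volume_fluid_limit}, and the same logarithmic-derivative identity for $g(t)=(1-z(t))/(1-\eta(t))$ to pass between the two forms (which the paper states as ``or equivalently'' without detail). The only cosmetic difference is that the paper obtains the uniform convergence needed for the integral limit from monotonicity of $Z^N_t$ and $\Gamma_N$ plus a.s.\ convergence to continuous limits, and the differentiability of $z$ and $\eta$ from a global Lipschitz bound, rather than from a martingale-tightness argument.
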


\begin{proof}
First observe that $z(t)$ is a differentiable function. Indeed for all fixed $N$, $t$, and $h$,
\begin{equation}
\Bigl| \expectationBig{ \frac{ Z_{[(t+h)N]} }{N} } - \expectationBig{ \frac{ Z_{[tN]} }{N} } \Bigr|
\le v h,
\end{equation}
which implies that $z(t)$ is globally Lipschitz continuous. Hence $z(t)$ is almost everywhere differentiable with respect to the Lebesgue measure. Using the same argument, we can show that $\eta$ is differentiable.

Now recall that since $\Phi$ is a Poisson point process, given the number of points in $\mathcal{C}$, $N = \Phi(\mathcal{C})$,
the position of each point is independently and uniformly distributed in $\mathcal{C}$.
Thus by definition of the exploration dynamics, we obtain that
\begin{align}
\expectationBig{ Z^N_{[tN]} | \mathcal{F}_{[tN]-1} }
&
\eqcom{\ref{eqn:Fluid_limit_for_ZNt_by_Penrose}}= Z^N_{[tN]-1} + \frac{1}{N} + \frac{1}{N} \expectation{\Phi(S_{[tN]})  | \mathcal{F}_{[tN]-1} } 
\nonumber \\ &
= Z^N_{[tN]-1} + \frac{1}{N} + \frac{1}{N} \expectation{ \mathrm{Bin}( N - Z_{[tN]-1} - 1, p_i ) | \mathcal{F}_{[tN]-1} },
\label{eqn:Deconstruction_of_Zn_into_Binomials}
\end{align}
where $p_i = \cardinality{ S_i } / ( |\mathcal{C}| - \sum_{j \leq [tN]-1 } \cardinality{ S_j } )$. Since $|\mathcal{C}| = N / \lambda$, it follows that
\begin{align}
\expectationBig{ N  ( Z^N_{[tN]} - Z^N_{[tN]-1} )  \big| \mathcal{F}_{[tN]-1} }
&
= 1 +  \Bigl( 1 - Z^N_{[tN]-1} - \frac{1}{N} \Bigr) \frac{\lambda \expectation{ \cardinality{ S_{[tN]} } | \F_{[tN]-1} } }{ 1 - \frac{1}{N}\sum_{j \leq [tN]-1} \cardinality{ S_j } }.
\label{eqn:Appearance_of_prefluid_limit_in_the_deconstruction_of_Zn_into_Binomials}
\end{align}
Hence writing $\Gamma_N(i) = (1/N) \sum_{j \leq i} \cardinality{ S_j } $, we obtain after summing these differences that
\begin{align}
\expectationBig{ \frac{ Z_{[tN]} }{N}  \big| \mathcal{F}_{[tN]-1} } 
&
= \frac{[tN]}{N} + \frac{1}{N} \sum_{i=1}^{[tN]} \Bigl( 1 - Z^N_{i-1} - \frac{1}{N} \Bigr) \frac{\lambda N \expectation{ \Gamma_N(i)-\Gamma_N(i-1) | \F_{[tN]-1} } }{ 1 - \Gamma_N(i - 1)} ,\\
& = \frac{[tN]}{N} +  \sum_{i=1}^{[tN]} \Bigl( 1 - Z^N_{i-1} - \frac{1}{N} \Bigr) \frac{\lambda \expectation{ \Gamma_N(i)-\Gamma_N(i-1) | \F_{[tN]-1} } }{ 1 - \Gamma_N(i - 1)} ,
\label{eqn:Appearancev2}
\end{align}

Using the almost sure convergence of both $Z^N_t$ and $\Gamma_N$ to continuous functions, and since both functional 
are increasing in time, we have uniform almost sure convergence on compact (macroscopic) time intervals.
As a consequence, we obtain the almost sure convergence of the right-hand side of the previous equality to the Riemann-Stieljes integral 
$\int_{0}^t ( 1-z_s ) / ( 1-\eta_s ) \d{\eta_s}$ which is equal to $ \int_{0}^t ( 1-z_s ) / ( 1-\eta_s ) \dot{\eta}_s \d{s}$,
using the differentiability of $\eta$. Therefore,
\begin{equation}
\dot z(t)
= 1 + (1-z(t))  \frac{\dot \eta(t)}{1-\eta(t)}
\quad
\textrm{or equivalently}
\quad
\dot z(t)
= 1 + \dot \eta(t) \e{ -\int_0^t \frac{1}{1-z(s)}ds }.
\label{eq:dotz}
\end{equation}
This concludes the proof.
\end{proof}

It is important to note that we do not have an explicit expression available for either $z(t)$ or $\eta(t)$, hence the differential system is not solvable. But now that we have with \refProposition{prop:Fundamental_relation_between_the_fraction_of_explored_volume_and_the_limiting_fraction_of_explored_points} a relation between the limiting fraction of explored volume $\eta(t)$ and the limiting fraction of explored vertices $z(t)$ at our disposal, we can find upper and lower bounds $u(t)$, $l(t)$ for $z(t)$ by deriving upper and lower bounds for $\eta(t)$. These upper and lower bounds immediately also provide us with lower and upper bound for the jamming constant, since
\begin{equation}
T^{\mathrm{lower}} := \inf \{ t > 0 | u(t) = 1 \}
\leq T^*
\leq \inf \{ t > 0 | l(t) = 1 \} =: T^{\mathrm{upper}}.
\end{equation}

\begin{prop}
\label{prop:Lower_and_upper_bounds_for_the_fluid_limit_of_the_spatial_process}
For $t\in[0,T^*]$,
\begin{equation}
l(t) \leq z(t) \leq u(t),
\end{equation}
where $l(t)$ and $u(t)$ are the solutions to
\begin{equation}
\dot l(t)
= 1+ c \Bigl( 1 - \frac{3ct}{ 1 - l(t) } \e{ - \int_0^t \frac{1}{1-l(s)} \d{s}} \Bigr) \e{ -\int_0^t \frac{1}{1-l(s)} \d{s} },
\quad
l(0) = 0,
\label{eq:wt}
\end{equation}
and
\begin{equation}
\dot u(t)
= 1+ c \e{-\int_0^t \frac{1}{1-u(s)} \d{s} },
\quad
u(0) = 0,
\label{eq:yt}
\end{equation}
i.e.\ $u(t) = c t + ( t - (1/c) ) \ln{ ( 1 - c t ) }$, respectively. Here, $c = \lambda \cardinality{ B(\cdot,r) }$.
\end{prop}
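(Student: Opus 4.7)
The plan is to apply the identity from Proposition~\ref{prop:Fundamental_relation_between_the_fraction_of_explored_volume_and_the_limiting_fraction_of_explored_points}, $\dot z(t) = 1 + \dot\eta(t)\exp(-\int_0^t \d{s}/(1-z(s)))$, together with upper and lower bounds on the rate $\dot\eta$ derived from bounding the newly explored volume $|S_n|$ at each step. Substituting these bounds into the identity will produce the ODEs that $u$ and $l$ satisfy, and the trajectorial ordering $l(t) \le z(t) \le u(t)$ will follow because the identity is linear in $\eta$ once the latter is regarded as known, so the resulting closed-form map $\eta \mapsto z$ is monotone.

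For the upper bound, observe that $S_n \subseteq B(X_{i_n},r)$ gives $|S_n| \le c/\lambda$ deterministically, and the fluid limit from Proposition~\ref{prop:Penroses_fluid_limit_on_RSA_and_our_volume_fluid_limit} then yields $\dot\eta(t) \le c$. Plugging $\dot\eta = c$ into the identity recovers \eqref{eq:yt}. To solve for $u$ in closed form, I would use that the saturating trajectory $\tilde\eta(t) = ct$ turns the first form of Proposition~\ref{prop:Fundamental_relation_between_the_fraction_of_explored_volume_and_the_limiting_fraction_of_explored_points} into the first-order linear ODE $\dot u = 1 + (1-u)c/(1-ct)$, whose solution is $u(t) = ct + (t-1/c)\ln(1-ct)$.

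For the lower bound, I would decompose $|S_n| = |B(\cdot,r)| - |B(X_{i_n},r) \cap E_{n-1}|$ and estimate the expected overlap from above. Since $X_{i_n}$ is uniform on $\mathcal{C} \setminus E_{n-1}$, a Palm-type exchange gives
\begin{equation*}
\E\big[|B(X_{i_n},r) \cap E_{n-1}| \,\big|\, \F_{n-1}\big]
= \frac{1}{|\mathcal{C}|(1-\eta_{n-1})} \int_{\mathcal{C}\setminus E_{n-1}} |B(x,r) \cap E_{n-1}| \,\d{x}.
\end{equation*}
A geometric estimate of the right-hand side, exploiting the non-exploration constraint $|X_{i_n} - X_{i_k}| > r$ for every previously chosen center together with the a priori upper bound $\eta(t) \le ct$, is expected to yield $\dot\eta(t) \ge c(1 - 3ct/(1-\eta(t)))$, the constant $3$ arising from the detailed geometry of overlapping radius-$r$ balls. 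Using the identity $1/(1-\eta(t)) = \exp(-\int_0^t \d{s}/(1-z(s)))/(1-z(t))$, obtained by integrating Proposition~\ref{prop:Fundamental_relation_between_the_fraction_of_explored_volume_and_the_limiting_fraction_of_explored_points}, and substituting into the $z$-ODE then reproduces \eqref{eq:wt} as the saturating equality for $l$.

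To convert these differential inequalities into the trajectorial bounds $l \le z \le u$, I would use the fact that for any fixed $\eta$ the linear ODE $\dot z = 1 + (1-z)\dot\eta/(1-\eta)$ admits the closed-form solution $z(t) = \eta(t) + (1-\eta(t)) \int_0^t \d{s}/(1-\eta(s))$. A direct computation shows that this functional is monotone nondecreasing in the trajectory $\eta$ whenever $\int_0^t \d{s}/(1-\eta(s)) \le 1$, a condition equivalent to $z \le 1$ and therefore valid on $[0,T^*]$. Combined with the pointwise control on $\eta$ from the previous two steps (applying standard ODE comparison to the self-referential inequality $\dot\eta \ge c(1 - 3ct/(1-\eta))$ for the lower bound), this yields $l(t) \le z(t) \le u(t)$ on $[0,T^*]$. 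The hard part will be the geometric estimate underlying the lower bound: producing the explicit constant $3$ requires a careful analysis of ball overlaps subject to the non-exploration constraint on the chosen centers. The trajectorial comparison itself, although superficially complicated by the nonlocal integral $\int_0^t \d{s}/(1-z(s))$, simplifies cleanly via the linear-in-$\eta$ closed form above.
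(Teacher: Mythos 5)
Your overall architecture matches the paper's: bound $\dot\eta$ above and below, feed the bounds into the identity of \refProposition{prop:Fundamental_relation_between_the_fraction_of_explored_volume_and_the_limiting_fraction_of_explored_points}, and close with an ODE comparison. The upper bound and the explicit solution for $u$ are correct, and your comparison step (the closed form $z(t)=\eta(t)+(1-\eta(t))\int_0^t \d{s}/(1-\eta(s))$ and its monotonicity in $\eta$ when $z\le 1$) is sound and in fact more explicit than what the paper writes down. The genuine gap is the lower bound on $\dot\eta$: you state that a "geometric estimate\ldots is expected to yield" $\dot\eta\ge c\bigl(1-3ct/(1-\eta)\bigr)$ and defer the constant $3$ to "a careful analysis of ball overlaps" that you never perform. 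Since the entire content of \refEquation{eq:wt} is that constant, this is the proposition's crux left unproved. The paper's argument (\refLemma{lemma:volume_bounds}) is much blunter than the overlap-volume computation you sketch: it introduces the event $\mathcal{E}_k=\{B^*(X_{i_k},r)\cap E_{k-1}=\emptyset\}$, discards everything on $\mathcal{E}_k^c$ by positivity so that $\expectation{\cardinality{S_k}}\ge v\,\probability{\mathcal{E}_k}$, and bounds $\probability{\mathcal{E}_k^c}$ by a union bound over the $k-1$ previously selected centers: because $X_{i_k}$ is unexplored it lies at distance $>r$ from each $X_{i_j}$, so its ball can meet $E_{k-1}$ only if $X_{i_k}$ falls in the annulus $B(X_{i_j},2r)\setminus B(X_{i_j},r)$, of volume $3v$ (in $d=2$), while $X_{i_k}$ is uniform on the unexplored region of volume $\cardinality{\mathcal{C}}-\sum_{j<k}\cardinality{S_j}$. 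That is where the $3$ comes from; no overlap geometry is needed.

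Your alternative route would actually work if executed: writing $\cardinality{S_n}=v-\cardinality{B(X_{i_n},r)\cap E_{n-1}}$ and using the uniformity of the unexplored point together with Fubini gives $\expectation{\cardinality{B(x,r)\cap E_{n-1}}}\le v\,(n-1)v/(\cardinality{\mathcal{C}}-\sum_{j<k}\cardinality{S_j})$, i.e.\ a constant $1$ in place of $3$, which implies the stated inequality a fortiori (since $1-x\ge 1-3x$). So the decomposition you chose is not wrong --- it is arguably sharper --- but as written the step that produces the constant is an announcement, not a proof, and you should either carry out that Fubini computation or substitute the paper's union-bound-on-the-no-overlap-event argument.
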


\begin{coro}
\label{cor:Limiting_behavior_of_the_spatial_fluid_limit_as_c_to_zero}
As $c \downarrow 0$, the spatial process has fluid limit $z(t) = (1+c) t - \tfrac{1}{2} c t^2 + \bigO{c^2}$.
\end{coro}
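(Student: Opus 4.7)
The plan is to apply the sandwich $l(t) \le z(t) \le u(t)$ from \refProposition{prop:Lower_and_upper_bounds_for_the_fluid_limit_of_the_spatial_process} and to show that both $l(t)$ and $u(t)$ admit the \emph{same} asymptotic expansion $(1+c)t - \tfrac{1}{2}c t^2 + O(c^2)$ as $c \downarrow 0$, uniformly for $t$ in a compact subinterval of $[0,1)$. Since $z$ is squeezed between these two bounds, the same expansion then holds for $z(t)$.

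For the upper bound the expansion would follow directly from the explicit formula $u(t) = ct + (t-1/c) \ln(1-ct)$ supplied by \refProposition{prop:Lower_and_upper_bounds_for_the_fluid_limit_of_the_spatial_process}. I would expand $\ln(1-ct) = -ct - \tfrac{1}{2} c^2 t^2 - \tfrac{1}{3} c^3 t^3 - \cdots$, multiply by $(t - 1/c)$, and collect terms: the $-1/c$ factor cancels the leading $-ct$ of the logarithm and produces $t + \tfrac{1}{2}c t^2 + \tfrac{1}{3} c^2 t^3 + \cdots$, while multiplication by $t$ contributes $-ct^2 - \tfrac{1}{2} c^2 t^3 - \cdots$. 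Adding the explicit $ct$ and combining gives $u(t) = (1+c)t - \tfrac{1}{2} c t^2 + O(c^2)$.

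For the lower bound I would carry out a regular perturbation of \refEquation{eq:wt}. Writing $l(t) = l^{(0)}(t) + c\, l^{(1)}(t) + O(c^2)$, the leading-order equation is $\dot l^{(0)} = 1$ with $l^{(0)}(0)=0$, hence $l^{(0)}(t) = t$. Substituting $l(s) = s + O(c)$ into $\int_0^t \d{s}/(1-l(s))$ yields $-\ln(1-t) + O(c)$, so $e^{-\int_0^t \d{s}/(1-l(s))} = (1-t) + O(c)$, and therefore $(3ct/(1-l(t))) e^{-\int} = 3ct + O(c^2)$, making the bracket in \refEquation{eq:wt} equal to $1 + O(c)$. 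Plugging back, one obtains $\dot l(t) = 1 + c(1-t) + O(c^2)$, and integrating gives $l(t) = t + c(t - \tfrac{1}{2} t^2) + O(c^2) = (1+c)t - \tfrac{1}{2} c t^2 + O(c^2)$, matching the upper-bound expansion.

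The main obstacle I expect is justifying the uniformity of the $O(c^2)$ remainder in the lower-bound expansion, since the kernel $1/(1-l(s))$ becomes singular as $l$ approaches $1$ while $T^* \to 1$ as $c\downarrow 0$. I would handle this with a Gronwall argument on the residual $r(t) := l(t) - (1+c)t + \tfrac{1}{2} c t^2$, exploiting that, for any fixed $T < 1$, the right-hand side of \refEquation{eq:wt} is Lipschitz in $l$ on $[0,T]\times[0,T]$ and that $l(t) \le u(t)$ stays uniformly bounded away from $1$ on $[0,T]$ once $c$ is small enough. The expansion then holds on any such $[0,T]$, in particular on any interval that ultimately contains $[0,T^*]$ as $c\downarrow 0$.
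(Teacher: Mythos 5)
Your proposal is correct and follows essentially the same route as the paper: sandwich $z(t)$ between $l(t)$ and $u(t)$ and show both bounds share the expansion $(1+c)t-\tfrac{1}{2}ct^2+\bigO{c^2}$, the paper doing this by a power-series perturbation of the defining ODEs where you expand $u$ from its closed form — an equivalent computation. Your added Gr\"onwall argument for uniformity of the remainder in the lower bound is a reasonable extra precaution that the paper's proof simply asserts away.
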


The proof of \refCorollary{cor:Limiting_behavior_of_the_spatial_fluid_limit_as_c_to_zero} can be found in \refAppendixSection{appendix:Proof_of_corollary_Limiting_behavior_of_the_spatial_fluid_limit_as_c_to_zero}. We now prove \refProposition{prop:Lower_and_upper_bounds_for_the_fluid_limit_of_the_spatial_process}, which is actually a direct consequence of the volume bounds in \refLemma{lemma:volume_bounds}.

\begin{lemma}
\label{lemma:volume_bounds}
For every step $k = 1, \ldots, N$, with $v = \cardinality{ B(\cdot,r) }$,
\begin{equation}
v \Bigl( 1- \frac{(k-1)3v}{|\mathcal{C}|-\sum_{j<k} S_j } \Bigr)
\leq \expectation{ \cardinality{ S_k } }
\leq v.
\label{eqn:volume_bounds}
\end{equation}
\end{lemma}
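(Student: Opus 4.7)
\textbf{Plan for proving \refLemma{lemma:volume_bounds}.}

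The upper bound is essentially immediate: by construction $S_k = B^*(X_{i_k}, r) \cap E_{k-1}^c \subseteq B^*(X_{i_k}, r)$, so $|S_k| \leq v$ deterministically, hence $\expectation{|S_k|} \leq v$. The content is in the lower bound.

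For the lower bound, the plan is to condition on $\mathcal{F}_{k-1}$ and exploit the fact that, because $\Phi$ is Poisson and therefore the positions of the $N$ points are i.i.d.\ uniform in $\mathcal{C}$, the conditional law of the selected point $X_{i_k}$ given $\mathcal{F}_{k-1}$ is uniform on $\mathcal{C} \setminus E_{k-1}$ (as $X_{i_k}$ is chosen uniformly among the non-explored Poisson points, which are uniform on $\mathcal{C}$ conditional on their number and on lying outside $E_{k-1}$). Writing $|S_k| = |B(X_{i_k},r) \cap \mathcal{C}| - |B(X_{i_k}, r) \cap E_{k-1}|$ and, modulo boundary effects of $\mathcal{C}$, $|B(X_{i_k},r) \cap \mathcal{C}| = v$, it remains to upper-bound $\expectation{ |B(X_{i_k}, r) \cap E_{k-1}| \mid \mathcal{F}_{k-1}}$.

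The key computation is a Fubini swap:
\begin{align*}
\expectation{ |B(X_{i_k}, r) \cap E_{k-1}| \mid \mathcal{F}_{k-1} }
&= \frac{1}{|\mathcal{C}| - |E_{k-1}|} \int_{\mathcal{C} \setminus E_{k-1}} |B(x,r) \cap E_{k-1}| \d{x} \\
&= \frac{1}{|\mathcal{C}| - |E_{k-1}|} \int_{E_{k-1}} |B(y,r) \cap (\mathcal{C} \setminus E_{k-1})| \d{y} \\
&\leq \frac{ v \, |E_{k-1}| }{|\mathcal{C}| - |E_{k-1}|}.
\end{align*}
Combining this with $|E_{k-1}| = \sum_{j<k} |S_j| \leq (k-1) v$ (by disjointness of the $S_j$'s and $|S_j| \leq v$) yields
\[
\expectation{ |S_k| \mid \mathcal{F}_{k-1} } \;\geq\; v \Bigl( 1 - \frac{(k-1) v}{ |\mathcal{C}| - \sum_{j<k} |S_j| } \Bigr),
\]
which is the lower bound of \refEquation{eqn:volume_bounds} up to the multiplicative constant in front of $v$ in the numerator; the factor $3$ in the statement is a slack absorbing boundary effects of $\mathcal{C}$ (where $B(X_{i_k},r)$ may exit $\mathcal{C}$) and the looser form in which $|E_{k-1}|$ is re-expressed in terms of centers rather than disjoint pieces.

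The main (very mild) obstacle is justifying that, conditionally on $\mathcal{F}_{k-1}$, $X_{i_k}$ is uniformly distributed on $\mathcal{C} \setminus E_{k-1}$; once this is established, the rest is a one-line Fubini plus the trivial bound $|E_{k-1}| \leq (k-1)v$. The constant $3$ in the printed bound is not essential to the argument, so no tight geometric packing estimate is required.
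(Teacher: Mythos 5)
Your proposal is correct (modulo the same boundary-of-$\mathcal{C}$ caveat that the paper itself sweeps aside) but it proves the lower bound by a genuinely different route. The paper conditions on the all-or-nothing event $\mathcal{E}_k = \{ B^*(X_{i_k},r) \cap E_{k-1} = \emptyset \}$, notes that $\expectation{|S_k|} \geq v\,\probability{\mathcal{E}_k}$, and then lower-bounds $\probability{\mathcal{E}_k}$ by a union bound over the annuli $B(X_{i_j},2r)\setminus B(X_{i_j},r)$ around the previously selected centers; the factor $3$ is the volume $(2^d-1)v$ of such an annulus in $d=2$. You instead write $|S_k| = v - |B(X_{i_k},r)\cap E_{k-1}|$ and bound the \emph{expected overlap} directly by the Fubini swap
$\int_{\mathcal{C}\setminus E_{k-1}} |B(x,r)\cap E_{k-1}|\,\mathrm{d}x = \int_{E_{k-1}} |B(y,r)\cap(\mathcal{C}\setminus E_{k-1})|\,\mathrm{d}y \leq v\,|E_{k-1}|$,
together with $|E_{k-1}| = \sum_{j<k}|S_j| \leq (k-1)v$. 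Both arguments rest on the same key fact (which you correctly flag as the only real point to justify), namely that conditionally on $\mathcal{F}_{k-1}$ the selected point is uniform on $\mathcal{C}\setminus E_{k-1}$. What your route buys is a strictly stronger constant: you obtain $(k-1)v$ in place of $(k-1)3v$, so the stated inequality follows a fortiori, and your constant does not secretly depend on the dimension through the annulus volume $(2^d-1)$, which is more in keeping with the paper's claim of dimension-independent bounds. It would also push to the right the point where the lower bound degenerates, which the paper's simulation section explicitly identifies as the weakness of \refLemma{lemma:volume_bounds}. The one thing to be careful about is not to present the factor $3$ as "slack absorbing boundary effects": in the paper it is a geometric annulus volume, and neither proof actually handles the clipping of $B(X_{i_k},r)$ at $\partial\mathcal{C}$.
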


\begin{proof}
The right inequality in \refEquation{eqn:volume_bounds} is immediate by noting that $\cardinality{ S_i^N } = \cardinality{ B^*(X_{i},r) \cap E_{i-1}^c } \leq \cardinality{ B^*(\cdot,r) }$. The fluid limit upper bound in  \refProposition{prop:Lower_and_upper_bounds_for_the_fluid_limit_of_the_spatial_process} then follows from $\eta$'s definition, since
\begin{equation}
\dot \eta(t)
= \lim_{N\to\infty} \lambda \expectation{ \cardinality{ S_{[tN]} } }
\leq \lambda v = c.
\label{eqn:Upper_bound_on_dot_gamma}
\end{equation}

We next prove the left inequality in \refEquation{eqn:volume_bounds}. Let $X_{i_k}$ be the selected point at step $k$ and define the event
\begin{equation*}
\mathcal{E}_{k} = \{ B^*(X_{i_k},r) \cap E_{k-1} = \emptyset \}.
\end{equation*}
Consequently, after decomposing and by strict positivity of $\cardinality{ S_k }$,
\begin{equation}
\expectation{ \cardinality{ S_k } }
= \expectation{ \cardinality{ S_k } | \mathcal{E}_{k} } \probability{ \mathcal{E}_{k} } + \expectation{ \cardinality{ S_k } | \mathcal{E}_{k}^c } \probability{ \mathcal{E}_{k}^c }
\geq \expectation{ \cardinality{ S_k } | \mathcal{E}_{k} } \probability{ \mathcal{E}_{k} }
= v \probability{ \mathcal{E}_{k} }.
\label{eqn:Decomposition_of_perimeter_event}
\end{equation}
We next use that the unexplored points are uniformly distributed in $E_{k-1}^c$ to obtain that
\begin{align}
\probability{ \mathcal{E}_{k} }
\geq 1 - (k-1) \probability{ \exists_{j < k} : X_{i_k} \in B(X_{i_j}, 2r)\setminus B(X_{i_j}, r) }
= 1 - \frac{ (k-1)3v }{ \cardinality{C} - \sum_{j<k} \cardinality{ S_j } }.
\label{eqn:Lower_bound_on_the_probability_of_event_Ek0}
\end{align}
Together with \refEquation{eqn:Decomposition_of_perimeter_event}, this gives the left inequality in \refEquation{eqn:volume_bounds}. To prove the fluid limit lower bound in \refProposition{prop:Lower_and_upper_bounds_for_the_fluid_limit_of_the_spatial_process}, gather that
\begin{align}
\dot \eta(t)
&
= \lim_{N\to\infty} \lambda \expectation{ \cardinality{ S_{[tN]} } }
\nonumber \\ &
\eqcom{\ref{eqn:Lower_bound_on_the_probability_of_event_Ek0}}\geq \lambda v \underset{N\to\infty}{\lim} \probability{ \mathcal{E}_{[tN]} }
= c \Bigl(  1- \frac{ 3ct }{ 1 - \eta(t) } \Bigr)  \eqcom{\ref{eq:dotz}}= c\Bigl( 1- \frac{3ct}{1-z(t)} \Bigr)e^{-\int_0^t \frac{1}{1-z(s)}ds}
\label{eq:limit_pAn}
\end{align}
This completes the proof.
\end{proof}

\subsection{Trajectorial bounds for $\process{Z_n}{n \geq 0}$}

We now construct trajectorial upper and lower bounds for the process $\process{Z_n}{n \geq 0}$ by constructing two couplings that have the same fluid limits as the upper and lower bound in \refSection{sec:Bounding_RSAs_fluid_limit_zt}. These lead in turn to lower and upper bounds for the jamming constant $T_Z^* = \inf\{ t \geq 0 | Z_t = N \}$, respectively.

\subsubsection{Upper bound process $\process{ U_n }{n \geq 0}$}

We define a new process $\process{ U_n }{n \geq 0}$ that corresponds to an exploration process of $\G$ with a higher discovery rate. As before, at each step (say step $n$) we choose a point $X_{i_n}$ uniformly at random among the non-explored points of the process $\process{ Z_n }{n \geq 0}$. There are now two possible situations:
\begin{description}
\item[Case 1: $X_{i_n}$ is also unexplored for $\process{U_n}{}$.] We define a new set $\tilde S_n= B^*(X_{i_n}, \tilde r_n)\cap \tilde E^c_{n-1} $, with $\tilde r_n$ chosen such that the area of $\tilde S_n$ and that of a free ball $v = \cardinality{ B^*(\cdot,r) }$ coincide, that is $| \tilde S_n| = v$. Here $\tilde E^c_{i} = \cup_{j=1}^{i-1} \tilde S_j$.
\item[Case 2: $X_{i_n}$ is already explored for $\process{U_n}{}$.] We now choose a different point $X'_{i_n}$ uniformly at random from the non-explored points of the process $\process{U_n}{}$, and consider it instead as the point that will be explored for the process $\process{U_n}{}$. We then proceed as in the previous case, by again letting $\tilde S_n = B^*(X_{i_n}, \tilde r_n) \cap \tilde E^c_{n-1}$ with $\tilde{r}_n$ such that $\cardinality{ \tilde{S}_n } = v$.
\end{description}
In both cases, the exploration process is updated -- as for the original process -- according  to  the following recursion equation:
\begin{equation}
U_{n} = 1 + U_{n-1} + \Phi(\tilde S_n).
\end{equation}
Note that crucially both $S_n$ and $\tilde S_n$ are constructed using the point $X_{i_n}$ initially chosen by the process $\process{Z_n}{n \geq 0}$. 

Now let $T_U^* = \inf\{ t > 0 | U_t = N \}$ be the time at which the coupled process $\process{U_n}{n \geq 0}$ completes the exploration of the graph. The coupling has been constructed such that the following holds.

\begin{prop}
The process $\process{ U_n }{ n \geq 0}$ is such that $U_n \geq Z_n$ almost surely, and $T_U^* \leq T_Z^*$ with probability one.
\label{prop:general_bound}
\end{prop}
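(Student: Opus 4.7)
The plan is to prove by induction on $n$ the pathwise inclusion $E_n \subseteq \tilde E_n$ for all $n \geq 0$. From this, the proposition follows in two short steps. First, since the successive increments $S_k$ (respectively $\tilde S_k$) lie in the complement of the region explored before step~$k$ and are therefore pairwise disjoint, telescoping the two recursions yields $Z_n = n + \Phi(E_n)$ and $U_n \geq n + \Phi(\tilde E_n)$ (with equality whenever the point used for the ``$+1$'' in Case~2 is not contained in $\tilde S_n$). Hence $U_n - Z_n \geq \Phi(\tilde E_n \setminus E_n) \geq 0$, giving $U_n \geq Z_n$ almost surely. Evaluating at $n = T_Z^*$ yields $U_{T_Z^*} \geq N$, and monotonicity of $U$ (together with $U_n \leq N$ for all $n$ reachable by the coupling) then forces $T_U^* \leq T_Z^*$.

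For the induction, the base case $E_0 = \tilde E_0 = \emptyset$ is trivial. For the inductive step, I would first rewrite both regions as plain unions of balls using the set identity $A \cup (B \cap A^c) = A \cup B$, so that
\begin{equation*}
E_n = E_{n-1} \cup B^*(X_{i_n}, r),
\qquad
\tilde E_n = \tilde E_{n-1} \cup B^*(X_{i_n}, \tilde r_n).
\end{equation*}
Here the fact that the \emph{same} center $X_{i_n}$ is used in the definitions of $S_n$ and $\tilde S_n$ --- in both Case~1 and Case~2 of the construction --- is crucial. Combining the inductive hypothesis $E_{n-1} \subseteq \tilde E_{n-1}$ with the inequality $\tilde r_n \geq r$ then yields $E_n \subseteq \tilde E_n$. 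The latter inequality follows in turn from the defining condition $\cardinality{\tilde S_n} = v$ together with $\tilde S_n \subseteq B^*(X_{i_n}, \tilde r_n)$, which force $\cardinality{B^*(X_{i_n}, \tilde r_n)} \geq v = \cardinality{B^*(\cdot, r)}$.

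I expect the only subtle point to be that the coupled construction must remain well-posed at every step, i.e.\ a radius $\tilde r_n$ realising $\cardinality{\tilde S_n} = v$ must actually exist. This requires the residual free volume $\cardinality{\mathcal{C} \setminus \tilde E_{n-1}}$ to be at least $v$ at each step, which is automatic up to the termination time $T_U^*$ provided boundary effects of $\mathcal{C}$ are negligible (e.g.\ if $\mathcal{C}$ is taken to be a torus, or if one restricts attention to steps $n$ with $n v < \cardinality{\mathcal{C}}$, which in the fluid regime comfortably contains the window $[0, T_U^*]$).
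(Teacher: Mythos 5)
Your proof is correct, and it uses the same coupling and the same induction idea as the paper, but it carries the induction at a different level, and the difference is worth noting. The paper's proof inducts directly on the counts: assuming $U_{n-1}\geq Z_{n-1}$ it asserts $U_n = 1+U_{n-1}+\Phi(\tilde S_n) \geq 1+Z_{n-1}+\Phi(S_n) = Z_n$ ``by construction of $S_n$ and $\tilde S_n$''. As stated, that step is not immediate, because the one-step increments are \emph{not} pointwise ordered: $S_n\subseteq E_{n-1}^c$ while $\tilde S_n\subseteq \tilde E_{n-1}^c$, and since $E_{n-1}\subseteq\tilde E_{n-1}$ the inclusion of the complements goes the wrong way, so $\Phi(S_n)\leq\Phi(\tilde S_n)$ can fail at an individual step. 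What actually closes the induction is exactly your cumulative bookkeeping: the disjointness of the increments gives $Z_n = n+\Phi(E_n)$ and $U_n \geq n+\Phi(\tilde E_n)$, the identity $E_n = E_{n-1}\cup B^*(X_{i_n},r)$ together with $\tilde r_n\geq r$ (forced by $\cardinality{\tilde S_n}=v$) gives $E_n\subseteq\tilde E_n$, and monotonicity of the counting measure does the rest. So your route is the rigorous version of the paper's one-line induction, at the cost of one extra lemma ($E_n\subseteq\tilde E_n$) that the paper leaves implicit. Your closing remark on the existence of a radius $\tilde r_n$ with $\cardinality{\tilde S_n}=v$ is a legitimate well-posedness caveat that the paper also does not address; it does not affect the ordering argument itself.
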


\begin{proof}
We couple the two explorations processes by using the same spatial point process $\Phi$ and the same exploration order for both processes. Next, we prove that the two processes stay ordered using induction: for the first step $n=1$, both process coincide and inequality holds. Consider next any $n > 1$ for which $U_{n-1} \geq Z_{n-1}$. By construction of $S_n$ and $\tilde S_n$ we obtain that $U_n = 1+ U_{n-1} + \Phi(\tilde S_n) \geq 1+ Z_{n-1} + \Phi(S_n) = Z_n$ almost surely.

Since we have now shown that at each step $U_n \geq Z_n$ almost surely, it follows immediately that $T_U^* \leq T_Z^*$ with probability one. This completes the proof.
\end{proof}

While the transition probabilities of the process $\process{ Z_n }{ n \geq 1 }$ cannot readily be explicitly calculated due to the spatial correlations between the points $\{ X_1, X_2, \ldots \}$, they can be calculated for the process $\process{ U_n }{ n \geq 1 }$ , as we prove in the \refProposition{prop:Fluid_limit_of_upper_bound_coupled_process_Un}.

\begin{prop}
\label{prop:Fluid_limit_of_upper_bound_coupled_process_Un}
Consider the scaled process $U_t^N = U_{[tN]} / N$, then for $T>0$,
\begin{equation}
\lim_{N \to \infty} \expectation{ \sup_{s\in[0,T]} | U_s^N - u(s) | }
= 0,
\end{equation}
where $u(t)$ is the solution to \refEquation{eq:yt}.
\end{prop}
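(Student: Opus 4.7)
The plan is to exploit that, by construction, the newly revealed area $|\tilde S_n|=v$ is deterministic at every step; this turns $\process{U_n}{n\geq 0}$ into a time-inhomogeneous Markov chain with explicit binomial transition probabilities, and the result then follows by adapting the martingale--Gronwall argument used to prove \refProposition{prop:LLN_comparing_Znt_and_Zt}. First I would identify the transition law. Since $|\tilde S_k|=v$ deterministically for every $k$, the explored volume after $n-1$ steps satisfies $|\tilde E_{n-1}|=(n-1)v$, so $|\tilde E_{n-1}^c|=\cardinality{\mathcal{C}}-(n-1)v$ deterministically. By the Poisson property of $\Phi$ conditional on $N$, and because the selected point $Y_n$ (being either $X_{i_n}$ or $X'_{i_n}$) is always uniform on the $U$-unexplored points, the remaining $N-U_{n-1}-1$ unexplored points are conditionally i.i.d.\ uniform on $\tilde E_{n-1}^c$. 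This yields
\[
\Phi(\tilde S_n)\mid \mathcal{F}_{n-1}^U \;\sim\; \mathrm{Bin}\Bigl(N-U_{n-1}-1,\; p_n\Bigr),\qquad p_n=\frac{v}{\cardinality{\mathcal{C}}-(n-1)v}=\frac{c/N}{1-(n-1)c/N}.
\]

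Next, I would use the semimartingale decomposition $U_n=\sum_{k=1}^n\bigl(1+(N-U_{k-1}-1)p_k\bigr)+M_n^U$, where $M_n^U$ is an $\mathcal{F}^U$-martingale with conditional variance bounded by $Np_k$. For any $T<1/c$, the quantities $p_k$ are uniformly bounded on $k\leq[TN]$, so Doob's $L^2$ inequality will give $\expectationBig{\sup_{s\leq T}(M_{[sN]}^U/N)^2}=\bigO{1/N}$, and the scaled conditional drift at $t=k/N$ converges, uniformly on $[0,T]\times[0,1]$, to $\gamma(u,t):=1+(1-u)c/(1-ct)$, which is Lipschitz in $u$ on that domain. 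Plugging these two ingredients into a time-inhomogeneous Gronwall comparison along the lines of the proof of \refProposition{prop:LLN_comparing_Znt_and_Zt} will yield $\expectationBig{\sup_{s\leq T}|U_s^N-u(s)|}\to 0$, where $u$ is the unique solution of $\dot u=\gamma(u,t)$ with $u(0)=0$. A direct check then shows this ODE is equivalent to \refEquation{eq:yt}; alternatively, the equivalence is immediate from \refProposition{prop:Fundamental_relation_between_the_fraction_of_explored_volume_and_the_limiting_fraction_of_explored_points} once one observes that the volume fluid limit for $U$ is the trivial $\eta^U(t)=ct$ (because $|\tilde S_k|=v$). Integration then gives the explicit form $u(t)=ct+(t-1/c)\ln(1-ct)$ with $u(1/c)=1$.

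Finally, for $T\geq 1/c$ the conclusion is extended by monotonicity: $U_n^N$ is non-decreasing and bounded by $1$ while $u$ is continuous and attains $1$ at $t=1/c$, so $\sup_{s\leq T}|U_s^N-u(s)|$ is controlled by $\sup_{s\leq T'}|U_s^N-u(s)|$ plus a term vanishing as $T'\uparrow 1/c$. The main obstacle will be the time-inhomogeneity of the drift together with the fact that its coefficient $c/(1-ct)$ blows up at $t=1/c$; this is why the Gronwall step has to be executed on strict subintervals $[0,T]$ with $T<1/c$ before being extended to the whole fluid time axis through the monotonicity argument just described.
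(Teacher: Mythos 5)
Your proposal is correct and follows essentially the same route as the paper, whose proof simply observes that the transition probabilities of $\process{U_n}{n\geq 0}$ are explicitly computable (binomial, since $\cardinality{\tilde S_n}=v$ deterministically) and then invokes the martingale--Gr\"onwall machinery surrounding \refEquation{eqn:Bound_on_Lp_norm_of_supremum_distance_ZsN_and_zs}. You merely execute in detail what the paper leaves implicit, and your explicit handling of the time-inhomogeneous drift $1+(1-u)c/(1-ct)$ and of the restriction to $T<1/c$ before extending by monotonicity is, if anything, more careful than the paper's two-line argument.
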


\begin{proof}
By \refProposition{prop:general_bound} we have that $Z_n\leq U_n$ almost surely. This implies that the corresponding fluid limits verify the same inequality, i.e.\ $z(t)\leq u(t)$. The convergence of the bounding process towards its fluid limit follows from the arguments given in \refSection{sec:Random_adsorption_under_a_homogeneous_relation}, see in particular the arguments surrounding \refEquation{eqn:Bound_on_Lp_norm_of_supremum_distance_ZsN_and_zs}.
\end{proof}

\subsubsection{Lower bound process $\process{ L_n }{ n \geq 0}$}

Simultaneously with the construction of the process $\process{ Z_n }{ n \geq 0 }$, we are now going to construct a new process $\process{L_n}{n \geq 0}$ that also corresponds to an exploration on $\G$ but then with a lower discovery rate. This then provides us with a trajectorial lower bound on $Z_n$, leading to an upper bound for the jamming constant.

Before choosing a new point $X_{i_n}$ to explore for step $n$, consider the event
\begin{equation}
\mathcal{P}(d_n)
= \Bigl\{ X_{i_n} \in \mathcal{C} \backslash \bigcup_{j < n} B(X_{i,j}, r + d_n) \Bigr\},
\end{equation}
where $d_n \geq r$ denotes a perimeter radius. This event has two properties, namely that:
\begin{itemize}
\item[(i)] $\probability{ \mathcal{P}(d_n) }$ is decreasing and continuous in $d_n$ and, 
\item[(ii)] $\probability{ \mathcal{P}(r) } = 1 - P(X_{i_n} \in \cup_{j<n} B(X_{i_j},2r)) \geq \max\left\{ 0, 1 - \frac{ 3(n-1)v }{   \cardinality{C} - \sum_{j<n} S_j } \right\} =: \alpha_n$. 
\end{itemize}
Hence there exists a $d_n \geq r$ such that $\probability{ \mathcal{P}(d_n) } = \alpha_n$. We choose $d_n$ as such. Now choose a point $X_{i_n}$ uniformly at random among the non-explored points \emph{according to} the process $\process{ Z_n }{ n \geq 0 }$, i.e.\
\begin{equation}
X_{i_n}
\overset{d}= \mathrm{Unif}\Bigl( \mathcal{C} \backslash \bigcup_{j < n} B( X_{i_j}, r ) \Bigr).
\end{equation}
While for the process $\process{ Z_n }{ n \geq 0 }$ the point $X_{i_n}$ and its neighbors are always added to the set of explored points, for the process $\process{L_n}{n \geq 0}$ we add $X_{i_n}$'s neighbors to the set of explored points only if $\mathcal{P}(d_n)$ turned out to be true. I.e.\
\begin{description}
\item[Case 1: $\mathcal{P}(d_n)$ is true.] Both $X_{i_n}$ as well as its neighbors will be considered explored. Since $d_n \geq r$, it must be that $\tilde{S}_n = B^*( X_{i_n}, r )$.
\item[Case 2: $\mathcal{P}(d_n)$ is false.] Only the point $X_{i_n}$ will be considered explored, i.e.\ $\tilde{S}_n = \phi$.
\end{description}
Consequentially, the exploration process $\process{L_n}{n \geq 0}$ is updated similarly to the process $\process{ Z_n }{ n \geq 0 }$, i.e.\ $L_n = 1 + L_{n-1} + \Phi(\tilde S_n)$, but note that now
\begin{equation}
\cardinality{ \tilde{S}_n }
=
\begin{cases}
0 & \text{ with probability } 1- \alpha_n, \\
v & \text{ with probability } \alpha_n.
\end{cases}
\end{equation}

A proof similar to that of \refProposition{prop:general_bound} implies that $L_n \leq Z_n$ almost surely. We now again consider its scaled version $L_t^N = L_{[tN]} / N$ and analyze its limit behavior as $N \to \infty$.

\begin{prop}
For $T>0$,
\begin{equation}
\lim_{N \to \infty}
\expectation{ \sup_{s\in[0,T]} | L_s^N - l(s) | }
= 0,
\end{equation}
where $l(t)$ is the solution to \refEquation{eq:wt}.
\end{prop}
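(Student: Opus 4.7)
The plan is to adapt the methodology of \refProposition{prop:LLN_comparing_Znt_and_Zt} and the proof of \refProposition{prop:Fundamental_relation_between_the_fraction_of_explored_volume_and_the_limiting_fraction_of_explored_points}, exploiting a key simplification of $\process{L_n}{n \geq 0}$ relative to $\process{Z_n}{n \geq 0}$: the newly explored volume $\cardinality{\tilde S_n}$ takes only the two values $0$ or $v$, depending on whether $\mathcal{P}(d_n)$ holds. This makes the drift of both the number of explored vertices and the cumulative explored volume fully explicit, so that standard martingale plus Gronwall arguments apply.

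First I would set up the joint Markov description. Let $\Gamma_L(n) = \sum_{j \leq n} \cardinality{\tilde S_j}$ denote the cumulative volume explored and $\Gamma_L^N(t) = \Gamma_L([tN])/\cardinality{\mathcal{C}}$ its rescaling. Conditioning on $\mathcal{F}_{n-1}$ and using that the Poisson points not yet $L$-explored are uniformly distributed in $\mathcal{C} \setminus \tilde E_{n-1}$ given their count, one obtains
\begin{align*}
\expectation{ L_n - L_{n-1} | \mathcal{F}_{n-1} }
&= 1 + \alpha_n (N - L_{n-1} - 1) \frac{v}{\cardinality{\mathcal{C}} - \Gamma_L(n-1)} + \bigO{1/N}, \\
\expectation{ \Gamma_L(n) - \Gamma_L(n-1) | \mathcal{F}_{n-1} }
&= \alpha_n v,
\end{align*}
where $\alpha_n$ is, in the fluid scaling, a continuous function of $\Gamma_L(n-1)/\cardinality{\mathcal{C}}$ and $(n-1)/N$.

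Second, I would construct the associated martingales $L_n - \sum_{k \leq n} \expectation{L_k - L_{k-1} | \mathcal{F}_{k-1}}$ and $\Gamma_L(n) - \sum_{k \leq n} \expectation{\Gamma_L(k) - \Gamma_L(k-1) | \mathcal{F}_{k-1}}$, and bound their quadratic variations using $\cardinality{\tilde S_n} \leq v$ together with standard Binomial variance estimates. A Gronwall-type argument, entirely analogous to the one leading to \eqref{eqn:Bound_on_Lp_norm_of_supremum_distance_ZsN_and_zs}, then yields uniform $L^1$ convergence on $[0,T]$ of the pair $(L_t^N, \Gamma_L^N(t))$ to the unique solution $(l(t), \eta_L(t))$ of the coupled ODE system
\begin{equation*}
\dot l(t) = 1 + (1 - l(t)) \frac{\dot \eta_L(t)}{1 - \eta_L(t)},
\qquad
\dot \eta_L(t) = c \Bigl( 1 - \frac{3 c t}{1 - \eta_L(t)} \Bigr),
\end{equation*}
with $l(0) = \eta_L(0) = 0$.

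Third, I would reduce this coupled system to the single ODE \eqref{eq:wt}. Repeating the integrating-factor computation used in the proof of \refProposition{prop:Fundamental_relation_between_the_fraction_of_explored_volume_and_the_limiting_fraction_of_explored_points} gives the identity $(1-l(t))/(1-\eta_L(t)) = \exp(-\int_0^t (1-l(s))^{-1} \d{s})$, and rearranging also yields $(1-\eta_L(t))^{-1} = (1-l(t))^{-1} \exp(-\int_0^t (1-l(s))^{-1} \d{s})$. Substituting both identities into $\dot l(t) = 1 + c \alpha(t) (1-l(t))/(1-\eta_L(t))$ with $\alpha(t) = 1 - 3ct/(1-\eta_L(t))$ reproduces exactly \eqref{eq:wt}.

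The main obstacle I expect is the rigorous justification of the drift computation in the first step: because $X_{i_n}$ is sampled among the non-explored points of the $\process{Z_n}{}$-process and not of the $\process{L_n}{}$-process, one must account for the discrepancy between the explored regions $E_{n-1}$ and $\tilde E_{n-1}$. A clean way forward is to condition additionally on $X_{i_n}$ and on $\mathds{1}[\mathcal{P}(d_n)]$, invoke the Poisson property so that the remaining Poisson points are uniform in $\mathcal{C} \setminus \tilde E_{n-1}$ given count, and then average over $X_{i_n}$; the geometric mismatch contributes only a vanishing $\bigO{1/N}$ correction that is absorbed by the Gronwall bound.
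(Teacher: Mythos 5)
Your proposal is correct and follows essentially the same route as the paper: the paper's (very terse) proof likewise computes $\dot \eta_l(t) = \lim_{N\to\infty} \lambda\, \expectation{ \cardinality{\tilde S_{[tN]}} \,|\, \mathcal{F}_{[tN]-1} } = \lim \lambda v \alpha_{[tN]}$ from the two-valued construction of $\cardinality{\tilde S_n}$, invokes the volume--vertex relation of \refProposition{prop:Fundamental_relation_between_the_fraction_of_explored_volume_and_the_limiting_fraction_of_explored_points} to get $\dot l = 1 + \dot\eta_l \e{-\int_0^t (1-l(s))^{-1}\d{s}}$, and relies on the martingale/Gr\"onwall arguments of \refSection{sec:Random_adsorption_under_a_homogeneous_relation} for the convergence. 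You simply spell out the drift computation, the coupled ODE system, and the integrating-factor reduction that the paper leaves implicit.
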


\begin{proof}
Using similar arguments as in the proof of \refProposition{prop:gamma}, we can prove that
\begin{equation}
\dot l(t)
= 1 + {\dot \eta_l(t)} \e{ -\int_0^t \frac{1}{1-l(s)} \d{s} }
\end{equation}
where
\begin{equation}
\dot \eta_l(t)
= \lim_{N \to \infty} \expectation{ \lambda \tilde v^N_{[tN]} }
\quad
\text{with}
\quad \tilde v_i^N = \expectation{ \cardinality{ \tilde S_i } |\mathcal{F}_{i-1} }
= v \alpha_i.
\end{equation}
Then by definition of the exploration process $\process{L_n}{n \geq 0}$ and equation \refEquation{eq:limit_pAn},
\begin{equation}
\dot \eta_l(t)
= \lambda v \lim_{N \to \infty} \probability{ \alpha_{[tN]} }
= c \Bigl(  1- \frac{3ct}{ 1-l(t) } \e{- \int_0^t \frac{1}{1-l(s)} \d{s}} \Bigr).
\end{equation}
This concludes the proof.
\end{proof}

\section{Simulation Results}
\label{sec:Simulations}

We now verify our results by simulating the exploration process $\process{Z_n}{n \geq 0}$ of \refSection{sec:Bounding_RSAs_fluid_limit_zt}. We simulate the two-dimensional variant in a box of size $l \times w$  with fixed density $\lambda > 0$, and then choosing for each of the $N = \lambda l w$ particles a position distributed uniformly at random. As part of the initialization, we calculate the distance matrix $D_{i,j} := | \vect{r}_i - \vect{r}_j |$ for all particles $i,j \in \{ 1, \ldots, n \}$, i.e.\ there are no periodic boundary conditions (the effects of which are negligible for large $N$), and set $\mathcal{A}_0 = \phi$. We then run the exploration process: At time $n+1$, we select a non-explored particle $v \in \mathcal{A}_n^c$, identify all of its unexplored neighbors $\mathcal{N}_v = \{ w \in \mathcal{A}_n^c | D_{v,w} < r \}$, and update $\mathcal{A}_{n+1} = \mathcal{A}_n \cup \{ v \} \cup \mathcal{N}_v$. This is repeated until all particles have been explored at time $\tau$ and consequently $\mathcal{A}_\tau^c = \phi$.

On the left in \refFigure{fig:Two_dimensional_jamming_state_and_its_scaled_process_with_bounds} we depict one resulting jammed state at time $t = T_Z^*$. On the right, we show the corresponding scaled process $Z^N_t$ together with our lower and upper bounds. Note that our bounds are tighter for small $t \lesssim 0.2$, and become looser for $t \gtrsim 0.2$. This happens because initially, newly arriving disks do not overlap with already deposited disks. Notice also that our lower bound becomes linear from $t \approx 0.2$ onward. This corresponds to us deleting only one particle at a time in the exploration process, and occurs because this is where our lower bound of \refLemma{lemma:volume_bounds} becomes invalid. We expect that if we can find a tighter lower bound than \refLemma{lemma:volume_bounds}, for example by using higher-order statistical information on the overlapping area between disks, this point should move to the right so that our lower bound for the scaled process will improve.

\begin{figure}[!hbtp]
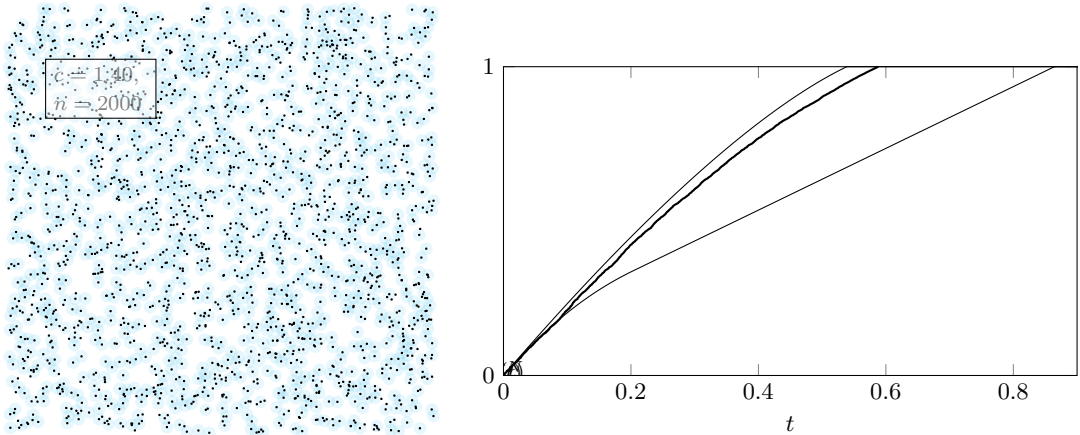

\centering
\footnotesize
\begin{subfigure}[t]{0.33\textwidth}
\centering
\input{Figure__Two_dimensional_jamming_state}
\end{subfigure}
\begin{subfigure}[t]{0.66\textwidth}
\centering
\input{Figure__Scaled_process_and_lower_and_upper_bounds_of_fluid_limit}
\end{subfigure}
\caption{The resulting jammed-state for a two-dimensional simulation of the spatial process (left). The corresponding scaled process $Z^N_t$ as a function of time, together with our lower bound $l(t)$ and upper bound $u(t)$ (right). Here, $c \approx 1.40$ and $N = 2000$.}
\label{fig:Two_dimensional_jamming_state_and_its_scaled_process_with_bounds}
\end{figure}

In \refFigure{fig:Jamming_constant_as_function_of_c}, we have plotted our upper bound for the jamming constant $T^{\textrm{upper}}$, our lower bound $T^{\textrm{lower}}$, the Erd\"{o}s--R\'{e}nyi solution $( \ln{(1+c)} ) / c$, as well as an average of the jamming constant $T_Z^* / N$ obtained from $20$ samples per value of $c$, for different values of $c$. The upper and lower bounds for the jamming constant are obtained by numerically solving the differential equations in \refEquation{eq:wt} and \refEquation{eq:yt}, respectively\footnote{We solved \refEquation{eq:wt} numerically by reformulating it as a system of differential equations. Specifically, we solved $\dot{w_1}(t) = 1 + \max{ \{ 0, c ( 1 - ( 3ct w_2(t) ) / ( 1 - w_1(t) ) ) w_2(t) \} }$, and $\dot{w_2}(t) = - w_2(t) / ( 1 - w_1(t) )$ for $w_1(t)$ with initial conditions $w_1(0) = 0$, $w_2(0) = 1$.}. The dashed line indicates a $99\%$ confidence interval. Here, $N = 1000$.

\begin{figure}[!hbtp]
\centering
\input{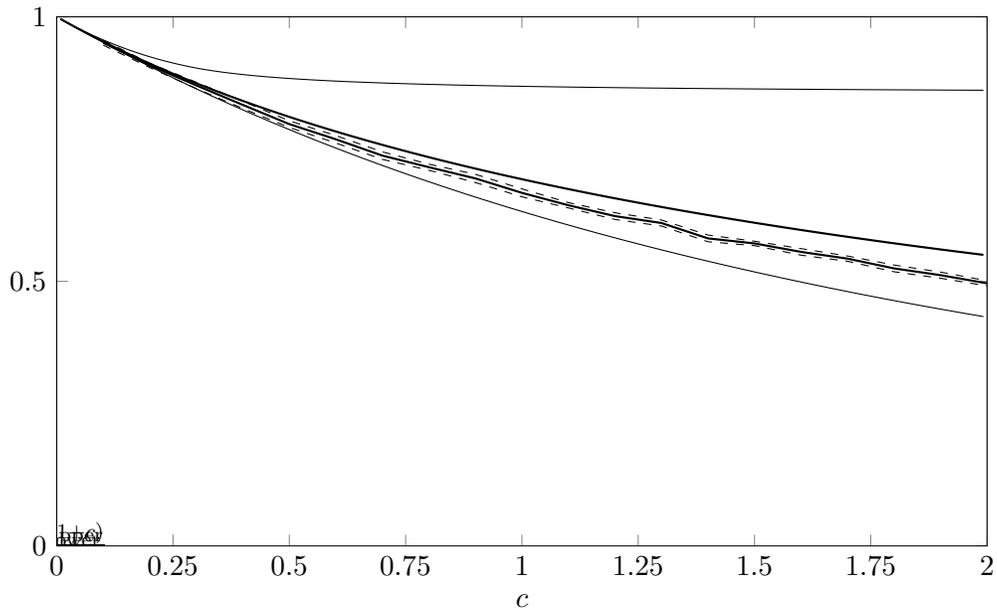}
\caption{Sample averages of the jamming constant as a function of $c$, together with our lower and upper bounds, and the Erd\"{o}s--R\'{e}nyi solution $( \ln{(1+c)} ) / c$.}
\label{fig:Jamming_constant_as_function_of_c}
\end{figure}

\refFigure{fig:Jamming_constant_as_function_of_c} illustrates, as \refCorollary{cor:Limiting_behavior_of_the_spatial_fluid_limit_as_c_to_zero} has proven, that our lower and upper bounds become tight as $c \downarrow 0$. As $c$ increases, our bounds loosen, because of the increased overlap between deposited disks. The Erd\"{o}s--R\'{e}nyi solution is surprisingly close to the actual jamming constant, particularly when we realize there is no geometry associated with the exploration process. The graph suggests that the Erd\"{o}s--R\'{e}nyi solution is an upper bound for the jamming constant, tighter than our upper bound. To prove this, it would be sufficient  (but not neccesary) to show that $\dot{\eta} / (1-\eta)$ is upper bounded by $c$. Our preliminary investigations have proven inconclusive thus far, but we believe the observation to be of interest for future research.

\section{Conclusions}
\label{sec:Conclusions}

We have shown that for sufficiently symmetric random graphs, exploration processes and their jamming limits can be asymptotically described with dynamical systems of lower complexity and with diffusion approximations. For geometric random graphs corresponding to the exploration of a marked Poisson process where this symmetry breaks, we build on fundamental properties of the Poisson process to provide generic bounds for the scaling limits, together with associated trajectorial coupling. These bounds are independent of the shape associated with the exploration and to the dimension of the point process. As future work, we want to investigate tighter bounds by taking into account both dimension and shapes, and we want to link these bounds to the scaling limits of symmetric random graphs.

\bibliographystyle{plain}
\bibliography{Bibliography}

\section*{Appendix}
\appendix

\section{Proof of \refProposition{prop:LLN_comparing_Znt_and_Zt}}
\label{appendix:Proof_of_proposition_LLN_comparing_Znt_and_Zt}

\begin{proof}
Doob's martingale decomposition \cite{steele_stochastic_2001} for the Markov process $\process{Z_n}{n \geq 0}$ gives that for $n \geq 0$,
\begin{equation} \label{eq:martingale_decomp_Z}
Z_n
= \sum_{i=0}^n ( 1 + \gamma_N(Z_i) ) + M_n.
\end{equation}
Here, we have used that $Z_0 = 0$, and $M_n$ denotes a local martingale that is actually a global martingale since the state space is finite.

We will now examine the scaled random variable $Z_t^N$, for which
\begin{align}
Z^N_t
&
= \frac{Z_{[tN]}}{N}
= \frac{1}{N} \sum_{i=0}^{[tN]} \bigl( 1+\gamma_N(Z_i) \bigr) + \frac{M_{[tN]}}{N}
\label{eq:martingale_decomp_Zn}
\\ &
\eqcom{i}= \nonumber \frac{1}{N} \int_{0}^{[tN]} \bigl( 1+\gamma_N(Z_s) \bigr) \d{s} + \frac{M_{[tN]}}{N}
\eqcom{ii}= \int_{0}^{ \frac{[tN]}{N} } \bigl( 1 + \gamma_N(Z_{uN}) \bigr) \d{u} + M^N_t,
\nonumber
\end{align}
since we (i) view each trajectory as being path-wise continuous, and (ii) use the change of variables $u = s / N$, and introduce the notation
$
M^N_t
= M_{[tN]} / N
$
for a scaled martingale.

We can replace the integral $\int_0^{[tN]/N} \cdots \d{u} $ by the integral $\int_0^t \cdots \d{u}$, which introduces an error $\Delta_{N,t}$. Specifically, we can write
\begin{equation}
\int_0^{ \frac{[tN]}{N} } \bigl( 1 + \gamma_N(Z_{uN}) \bigr) \d{u}
= \int_0^t \bigl( 1 + \gamma_N(Z_{uN}) \bigr) \d{u} + \Delta_{N,t}
\end{equation}
where
\begin{equation}
\Delta_{N,t}
= \int_0^{ \frac{[tN]}{N} } \bigl( 1 + \gamma_N(Z_{uN}) \bigr) \d{u} - \int_0^t \bigl( 1 + \gamma_N(Z_{uN}) \bigr) \d{u}.
\end{equation}
For large $N$ such replacement has negligible impact, since independently of $t$,
\begin{equation}
| \Delta_{N,t} |
\leq \sup_{u\in[0,1]} \{ 1 + \gamma_N(Z_{uN}) \} \Bigl| \frac{[tN]}{N} - t \Bigr|
\leq \frac{1 + \bar{\gamma}_N }{N},
\label{eqn:Bound_on_the_error_by_a_replacement_independent_of_t}
\end{equation}
where in the last inequality we have used that $\bar{\gamma}_N = \sup_x \gamma_N(x)$ and $| [tN] - tN | \leq 1$.

Using (i) the integral version of \refEquation{eq:ODE.disc}, the triangle inequality \cite{rudin_real_1987}, and (ii) Lipschitz continuity of $\gamma$, condition \refEquation{eq:def_gamma} and bound \refEquation{eqn:Bound_on_the_error_by_a_replacement_independent_of_t}, we find that
\begin{align}
\sup_{s\in[0,t]}| Z^N_s - z(s) |
&
\eqcom{i}\leq \sup_{s \in [0,t]} \Bigl( \int_0^{s} \bigl| \gamma_N(Z_{uN}) - \gamma(z(u)) \bigr| \d{u} + |\Delta_{N,s}| + | M_s^N | \Bigr)
\label{eqn:First_bound_on_sup_of_ZNs_and_Zs}
\\ &
\eqcom{ii}\leq C_L \int_0^{t} \sup_{u\in[0,s]}| Z^N_u - z(u)| \d{s} + \delta_N t + \frac{1+\bar{\gamma}_N}{N} + \sup_{s\in[0,t]}|M_s^N|.
\nonumber
\end{align}
Next, we define $\epsilon_N(T)= \sup_{s \in [0,T]} |Z^{N}_s- z(s)|$ for notational convenience and to prepare for an application of Gr\"{o}nwall's lemma \cite{steele_stochastic_2001}. Eq.\ \refEquation{eqn:First_bound_on_sup_of_ZNs_and_Zs} then shortens for $T > 0$ to
\begin{equation}
\epsilon_N(T) \leq \delta_N T + \frac{1+\bar{\gamma}_N}{N} + \sup_{s\in[0,T]}|M_s^N| + C_L \int_0^T \epsilon_N(s) \d{s}.
\end{equation}
Because $\delta_N T + (1+\bar{\gamma}_N)/N + \sup_{s\in[0,T]}|M_s^N|$ is nondecreasing in $T$, it follows from Gr\"{o}nwall's lemma that
\begin{equation}
\epsilon_N(T)
\leq \Bigl( \delta_N T + \frac{1+\bar{\gamma}_N}{N} + \sup_{s\in[0,T]}|M_s^N| \Bigr) \e{ C_L T }.
\end{equation}
Using Minkowsky's inequality for $p \in [1,\infty)$, strict monotonicity of $\exp{( C_L T )}$ and $\delta_N T$, and the triangle inequality, we find that
\begin{equation}
\lVert \epsilon_N(T) \rVert_p
\le \Bigl( \delta_N T + \frac{1+\bar{\gamma}_N}{N} + \lVert \sup_{s \in [0,T]} |M_s^N| \rVert_p \Bigr) \e{ C_L T }.
\end{equation}
Finally, using Doob's martingale inequality \cite{steele_stochastic_2001} for $p > 1$, we obtain
\begin{equation}
\lVert \epsilon_N(T) \rVert_p
\le \Big( \delta_N T + \frac{1+\bar{\gamma}_N}{N} + \kappa_p \lVert M_T^N \rVert_p \Big) \e{ C_L T },
\end{equation}
completing the first part of the proof.

For $p = 2$, this inequality can be further simplified by computing the increasing process associated to the martingale. Note specifically that for $l \geq 0$ we have
\begin{equation}
\expectation{ (M_l)^2 }
= \expectation{ \langle M_l \rangle }
= \expectationBig{ \sum_{i=0}^{l} {\textrm{Var}}[ \gamma_N(Z_i) ] }
\label{eqn:L2_Experience_of_ML2}
\end{equation}
where
\begin{equation}
\variance{ \gamma_N(x) }
= \sum_{k=0}^{N-x-1} (k+1)^2 p_{x, x+k+1} - \Bigl( \sum_{k=0}^{N-x-1} (k+1) p_{x, x+k+1} \Bigr)^2
= \psi_N(x).
\label{eqn:L2_Variance_of_gamma_N}
\end{equation}
Therefore for the scaled martingale $M_t^N$, we find by combining \refEquation{eqn:L2_Experience_of_ML2} and \refEquation{eqn:L2_Variance_of_gamma_N} that for $t > 0$
\begin{equation}
\lVert M_t^N \rVert_2^{2}
= \expectation{ (M_{t}^N)^2 }
= \frac{ \expectation{ M^2_{ [ tN ] } } }{N^2}
= \frac{1}{N^2} \sum_{i=0}^{[tN]} \psi_N(Z_i)
\leq \frac{\bar{\psi}_N t}{N}.
\label{eqn:Bound_on_L2_norm_of_MtN}
\end{equation}
This completes the second part of the proof.
\end{proof}

\section{Proof of \refProposition{prop:Diffusion_approximation_with_error_bound}}
\label{appendix:Proof_of_proposition_Diffusion_approximation_with_error_bound}

\begin{proof}
We adapt the results of Kurtz which were derived for continuous time Markov jump processes.
For doing so, we can
replace the Poisson processes  involved in the construction of the jump processes by some random walks
that can be used to construct discrete time Markov chains.
We can then use exactly the same steps as in \cite{kurtz78},
by first comparing the original process $Z^N$ to a diffusion of the form
\begin{equation}
\tilde Z^N_t
= \frac{1}{N} \sum_{l \le N} l B_l (N \sum_0^t p_N(l,\tilde Z^N_s) \d{s} ),
\end{equation}
that is a sum of a finite number of scaled independent Brownian motions $B_l$.

Rewriting the inequalities in \cite[(3.6)]{kurtz78}, and using a random walk version of the approximation lemma of Koml\'{o}s--Major--Tusn\'{a}dy \cite{berkes2014}, we obtain
\begin{equation}
\expectationBig{ \sup_{t \le T} |\tilde Z^N_t - Z^N_t| }
\leq C_2 \frac{\log(N)}{N}.
\end{equation}
This leads using the results of \cite[Section~3]{kurtz78} to
\begin{equation}
\expectationBig{ \sup_{t \le T} | W^N_t- W_t | }
\leq C_3 \frac{\log(N)}{\sqrt{N}},
\end{equation}
which concludes the proof.
\end{proof}

\section{Proof of \refProposition{prop:Bound_on_L2_norm_of_TNstar_minus_Tstar}}
\label{appendix:Proof_of_proposition_Bound_on_L2_norm_of_TNstar_minus_Tstar}

\begin{proof}
Remark that if $| z(s) - Z_{s}^N | \le \delta / 2$ for all $s>0$, that then
\begin{equation}
\Bigl| \frac{T^*_N}{N} - T^* \Bigr|
\leq \Bigl| (z-\tfrac{1}{2}\delta)^{-1}(1) - (z+\tfrac{1}{2}\delta)^{-1}(1)\Bigr|
\leq |(T^*+\tfrac{1}{2}\delta) - (T^*-\tfrac{1}{2}\delta)|
= \delta.
\end{equation}
Here, the last inequality follows from the fact that $\dot{z}(s) = 1 + \gamma(z(s)) \geq 1$, since
\begin{align}
(z-\tfrac{1}{2}\delta)(T^{*}+\tfrac{1}{2}\delta)
&
= (z-\tfrac{1}{2}\delta)(z^{-1}(1)+\tfrac{1}{2}\delta)
\nonumber \\ &
\geq z(z^{-1}(1))+ \tfrac{1}{2}\delta - \tfrac{1}{2}\delta
= 1
=(z-\tfrac{1}{2}\delta)( (z-\tfrac{1}{2}\delta)^{-1}(1)).
\end{align}
Thus the first claim follows directly from the observation that the event
\begin{equation}
\Big\{ \Bigl| \frac{T^*_N}{N} - T^* \Bigr| \ge \delta \Big\}
\subseteq \big\{ | z(s) - Z_{s}^N | \geq \tfrac{1}{2}\delta \big\},
\label{eqn:Subset_statement_on_TNstar_and_Tstar_wrt_zs_and_ZsN}
\end{equation}
and then using (i) Markov's inequality \cite{steele_stochastic_2001}, and (ii) invoking \refProposition{prop:LLN_comparing_Znt_and_Zt}, so that
\begin{equation}
\probabilityBig{ \Bigl| \frac{T^*_N}{N} - T^* \Bigr| \geq \delta }
\eqcom{\ref{eqn:Subset_statement_on_TNstar_and_Tstar_wrt_zs_and_ZsN}}\leq \probabilityBig{ | z(s) - Z_{s}^N | \geq \tfrac{1}{2}\delta }
\eqcom{i}\leq \frac{2}{\delta} \expectation{ | z(s) - Z_{s}^N | }
\eqcom{ii}\leq \frac{2 \omega_N}{\delta}.
\label{eqn:Bound_on_probability_of_difference_TNstar_Tstar_within_proof}
\end{equation}

Now (i) using that $Z_{T_N^*} / N = z(T^*) = 1$ together with \refEquation{eq:ODE.disc} and \refEquation{eq:martingale_decomp_Zn}, and (ii) after expanding the integrals, we find that
\begin{align}
\label{eq:Tmart}
\frac{T^*_N}{N}- T^*
&
\eqcom{i}= \int_0^{T^*} \gamma(z(s)) \d{s} - \int_0^{\frac{T^*_N}{N}} \gamma_N(Z_{sN}) \d{s}
- \frac{M_{T_N^*}}{N}
\nonumber \\ &
\eqcom{ii}= \int_0^{ \frac{T_N^*}{N} \wedge T^* } ( \gamma(z(s)) - \gamma_N(Z_{sN}) ) \d{s} - \frac{M_{T_N^*}}{N}
\nonumber \\ &
\phantom{=} + \int_{ \frac{T_N^*}{N} \wedge T^* }^{  T^* } \gamma(z(s)) \d{s} - \int_{ \frac{T_N^*}{N} \wedge T^* }^{  \frac{T_N^*}{N} } \gamma_N(Z_{sN}) \d{s}.
\end{align}
Then taking the absolute value and using the triangle inequality, it follows that
\begin{align}
\Bigl| \frac{T^*_N}{N} - T^* \Bigr|
\leq & \int_0^{ \frac{T_N^*}{N} \wedge T^*} |\gamma(z(s)) - \gamma_N(Z_{sN})| \d{s} + |M_{T_N^*/N}^N|
\nonumber \\ &
+ \int_{ \frac{T_N^*}{N} \wedge T^*}^{ T^* } | \gamma(z(s))| \d{s} + \int_{ \frac{T_N^*}{N} \wedge T^* }^{ \frac{T_N^*}{N} } | \gamma_N(Z_{sN}) | \d{s}.
\end{align}
Approximating $\gamma_N$ by $\gamma$ via \refEquation{eq:def_gamma}, using Lipschitz continuity of $\gamma$, and recalling that $\max \{ T_N^* / N, \allowbreak T^* \} \allowbreak \leq 1$, we find that
\begin{equation}
\Bigl| \frac{T^*_N}{N} - T^* \Bigr|
\leq 2 C_L \sup_{s \le1 } | z(s) - Z_{s}^N | + 2 \delta_N + |M_{T_N^*/N}^N| + \int_{ \frac{T_N^*}{N} \wedge T^* }^{ \frac{T_N^*}{N} \vee T^* } | \gamma(z(s)) | \d{s}.
\end{equation}

The continuity of $\gamma(x)$ guarantees that there exist constants $C_1, \varepsilon > 0$ such that (i) $\gamma(z(s)) \leq 1 - \varepsilon$ for all $s \geq C_1$, and (ii) $C_1 < T^* - \delta$, provided that $\delta$ is sufficiently small. There are now two possible cases: either (a) $C_1 \leq T_N^* / N \wedge T^*$, or (b) $T_N^* / N \wedge T^* < C_1 < T_N^* / N \vee T^*$. For convenience, we first split the integral according to
\begin{equation}
\int_{\frac{T_N^*}{N} \wedge T^*}^{\frac{T_N^*}{N} \vee T^*} | \gamma(z(s)) | \d{s}
= \int_{\frac{T_N^*}{N} \wedge T^*}^{\frac{T_N^*}{N} \vee T^*} | \gamma(z(s)) | ( \indicator{ s < C_1 } + \indicator{ s \geq C_1 } ) \d{s}.
\end{equation}
Then splitting further into case (a), we have that
\begin{equation}
\int_{\frac{T_N^*}{N} \wedge T^*}^{\frac{T_N^*}{N} \vee T^*} | \gamma(z(s)) | \indicatorBig{ s < C_1, C_1 \leq \frac{T_N^*}{N} \wedge T^* } \d{s} = 0,
\end{equation}
and
\begin{align}
&
\int_{\frac{T_N^*}{N} \wedge T^*}^{\frac{T_N^*}{N} \vee T^*} | \gamma(z(s)) | \indicatorBig{ s \geq C_1, C_1 \leq \frac{T_N^*}{N} \wedge T^* } \d{s}
\\ &
\leq (1-\varepsilon) \Bigl| \frac{T_N^*}{N} - T^* \Bigr| \indicatorBig{ C_1 \leq \frac{T_N^*}{N} \wedge T^* }.
\nonumber
\end{align}
Next let $C_2$ be a constant such that $C_2 \geq \int_{ T_N^* / N \wedge T^* }^{C_1} | \gamma(z(s)) | \d{s}$. We can then, after splitting further into case (b), bound
\begin{align}
&
\int_{\frac{T_N^*}{N} \wedge T^*}^{\frac{T_N^*}{N} \vee T^*} | \gamma(z(s)) | \indicatorBig{ s < C_1, \frac{T_N^*}{N} \wedge T^* < C_1 < \frac{T_N^*}{N} \vee T^* } \d{s}
\\ &
= \int_{ \frac{T_N^*}{N} \wedge T^* }^{C_1} | \gamma(z(s)) | \d{s} \indicatorBig{ \frac{T_N^*}{N} \wedge T^* < C_1 < \frac{T_N^*}{N} \vee T^* }
\nonumber \\ &
\leq C_2 \indicatorBig{ \frac{T_N^*}{N} \wedge T^* < C_1 < \frac{T_N^*}{N} \vee T^* }
\leq C_2 \indicatorBig{ \frac{T_N^*}{N} < C_1 },
\nonumber
\end{align}
since if $\indicator{ T_N^* / N \wedge T^* < C_1 < T_N^* / N \vee T^* } = 1$, clearly $T_N^* / N \wedge T^* < C_1$. But by construction $C_1 < T^*$, so it must hold that $T_N^* / N < C_1$ and thus $\indicator{ T_N^* / N < C_1 } = 1$. Next, we bound
\begin{align}
&
\int_{\frac{T_N^*}{N} \wedge T^*}^{\frac{T_N^*}{N} \vee T^*} | \gamma(z(s)) | \indicatorBig{ s \geq C_1, \frac{T_N^*}{N} \wedge T^* < C_1 < \frac{T_N^*}{N} \vee T^* } \d{s}
\\ &
\leq (1-\varepsilon) \Bigl| \frac{T_N^*}{N} - T^* \Bigr| \indicatorBig{ \frac{T_N^*}{N} \wedge T^* < C_1 < \frac{T_N^*}{N} \vee T^* }.
\nonumber
\end{align}
Summarizing, there thus exists a constant $C_2$ such that
\begin{align}
\Bigl| \frac{T^*_N}{N} - T^* \Bigr|
&
\leq 2 C_L \sup_{s \le1 } | z(s) - Z_{s}^N | + 2 \delta_N + |M_{T_N^*/N}^N|
\label{eqn:Bound_on_difference_Tnstar_Tstar_pathwise}
\\ &
\phantom{\leq}
+ (1-\varepsilon) \Bigl| \frac{T_N^*}{N} - T^* \Bigr| + C_2 \indicatorBig{ \frac{T_N^*}{N} < C_1 }. \nonumber
\end{align}

Now recall that if $|z(s) - Z_s^N | \leq \delta / 2$, then $| T_N^* / N - T^* | \leq \delta$. Moreover then also $T_N^* / N \geq C_1$ since $C_1 < T^* - \delta$. Hence,
\begin{equation}
\Bigl\{ \frac{T_N^*}{N} < C_1 \Bigr\}
\subset \bigl\{ |z(s) - Z_s^N | > \tfrac{1}{2} \delta \bigr\}.
\label{eqn:Event_on_C1_implies_event_on_zs}
\end{equation}
Then by (i) collecting terms in and subsequently using \refEquation{eqn:Bound_on_difference_Tnstar_Tstar_pathwise}, and then (ii) applying Minkowski's inequality \cite{rudin_real_1987}, we obtain
\begin{align}
&
\varepsilon \Bigl\lVert \frac{T_N^*}{N} - T^* \Bigr\rVert_2
\eqcom{i}\leq \Bigl\lVert 2 C_L \sup_{s \le1 } | z(s) - Z_{s}^N | + 2 \delta_N + | M^N_{T_N^*/N} | + C_2 \indicatorBig{ \frac{T_N^*}{N} < C_1 } \Bigr\rVert_2
\nonumber \\ &
\eqcom{ii}\leq 2 C_L \lVert \sup_{s \le1 } | z(s) - Z_{s}^N | \rVert_2 + 2 \delta_N + \lVert M^N_{T_N^*/N} \rVert_2 + C_2 \Bigl\lVert \indicatorBig{ \frac{T_N^*}{N} < C_1 } \Bigr\rVert_2.
\end{align}
We now note that (iii) since $f(y) = y^2$ is monotonically increasing for $y \geq 0$ and (iv) by Markov's inequality,
\begin{align}
&
\Bigl\lVert \indicatorBig{ \frac{T_N^*}{N} < C_1 } \Bigr\rVert_2
= \probabilityBig{ \frac{T_N^*}{N} < C_1 }^{\frac{1}{2}}
\eqcom{\ref{eqn:Event_on_C1_implies_event_on_zs}}\leq \probability{ | z(s) - Z_s^N | > \tfrac{1}{2} \delta }^{\frac{1}{2}}
\\ &
\eqcom{iii}= \probability{ | z(s) - Z_s^N |^2 > \tfrac{1}{4} \delta^2 }^{\frac{1}{2}}
\eqcom{iv}\leq \frac{2}{\delta} \expectation{ | z(s) - Z_s^N |^2 }^{\frac{1}{2}}
= \frac{2}{\delta} \lVert z(s) - Z_s^N \rVert_2.
\nonumber
\end{align}
Therefore,
\begin{equation}
\varepsilon \Bigl\lVert \frac{T^*_N}{N} - T^* \Big\rVert_2
\leq \bigl( 2 C_L + \frac{2C_2}{\delta} \bigr) \lVert \sup_{s \le 1 } | z(s) - Z_{s}^N | \rVert_2 + 2\delta_N + \lVert M_{T_N^*/N}^N \rVert_2.
\end{equation}
Thus by finally using \refProposition{prop:LLN_comparing_Znt_and_Zt} and \refEquation{eqn:Bound_on_L2_norm_of_MtN}, we have that there exist constants $C_3$, $C_4$ so that
\begin{equation}
\varepsilon \Big\lVert \frac{T^*_N}{N} - T^* \Big\rVert_2
\leq C_3 \omega_N + 2\delta_N + \sqrt{ \frac{\bar{\psi}_N}{N} }
\leq C_4 \omega_N,
\end{equation}
which concludes the proof.
\end{proof}

\section{Proof of \refProposition{prop:CLT_of_Tstar}}
\label{appendix:Proof_of_proposition_CLT_of_Tstar}

\begin{proof}
First, recall that by \refEquation{eq:ODE.disc} and \refEquation{eq:martingale_decomp_Zn}, see \refEquation{eq:Tmart},
\begin{equation}
\frac{T^*_N}{N} - T^*
= \int_0^{T^*} \gamma(z(s)) \d{s} - \int_0^{\frac{T^*_N}{N}} \gamma_N(Z_{sN}) \d{s} - M_{T_N^*/N}^N,
\label{eqn:Difference_of_TstarN_over_N_and_Tstar}
\end{equation}
Note furthermore that
\begin{align}
W_{T^*}^N
&
\eqcom{\ref{eqn:Definition_of_WNt}}= \sqrt{N} \bigl( Z_{T^*}^N - z(T^*) \bigr)
\eqcom{\ref{eq:martingale_decomp_Zn}}= \sqrt{N} \Bigl( \int_0^{ \frac{[T^*N]}{N} } ( 1 + \gamma_N(Z_{sN}) ) \d{s} + M_{T^*}^N - z(T^*) \Bigr)
\nonumber \\ &
= \sqrt{N} \Bigl( \int_0^{T^*} ( 1 + \gamma_N(Z_{sN}) ) \d{s} + M_{T^*}^N - z(T^*) \Bigr) + \sqrt{N} \Delta_{N,T^*}
\label{eq:decomposition_Wt*}
\end{align}
where $M_{T^*}^N = M_{[T^*N]} / N$. Recall that the error $\Delta_{N,T^*}$ introduced by replacing the upper integration boundary, can readily be bounded by $|\Delta_{N,T^*}| \leq (1+\bar{\gamma}_N)/N$, see \refEquation{eqn:Bound_on_the_error_by_a_replacement_independent_of_t}.

Comparing \refEquation{eqn:Difference_of_TstarN_over_N_and_Tstar} and \refEquation{eq:decomposition_Wt*}, a subsequent natural series of steps would be to \textnormal{(i)} add comparison terms $\pm W_{T^*}^N$ and use the triangle inequality, and then \textnormal{(ii)} substitute \refEquation{eq:decomposition_Wt*}, use the triangle inequality, and upper bound $|\Delta_{N,T^*}| \leq (1+\bar{\gamma}_N)/N$, after which we arrive at
\begin{align}
&
\Bigl| \sqrt{N} \Bigl( \frac{T_N^*}{N} - T^* \Bigr) + W_{T^*} \Bigr|
\nonumber
\\ &
\overset{\textnormal{(i)}} \leq \Bigl| \sqrt{N} \Bigl( \int_0^{T^*} \gamma(z(s)) \d{s} - \int_0^{\frac{T^*_N}{N}} \gamma_N(Z_{sN}) \d{s} - M_{T_N^*/N}^N \Bigr) + W_{T^*}^N \Bigr|
+ | W_{T^*} - W_{T^*}^N |
\nonumber \\ &
\overset{\textnormal{(ii)}} \leq \Bigl| \sqrt{N} \Bigl( \int_0^{T^*} \gamma(z(s)) \d{s} - \int_0^{\frac{T^*_N}{N}} \gamma_N(Z_{sN}) \d{s} - M_{T_N^*/N}^N \Bigr)
\nonumber \\ &
\phantom{\leq \Big|} + \sqrt{N} \Bigl( \int_0^{T^*} ( 1 + \gamma_N(Z_{sN}) ) \d{s} + M_{T^*}^N - z(T^*) \Bigr) \Bigr| + | W_{T^*} - W_{T^*}^N | + \frac{1+\bar{\gamma}_N}{\sqrt{N}}
\nonumber \\ &
= \textnormal{ term I } + \textnormal{ term II } + \frac{1+\bar{\gamma}_N}{\sqrt{N}}.
\end{align}
We will now proceed and bound term I and II.

Note that the expectation of term II can be directly bounded by \refProposition{prop:Diffusion_approximation_with_error_bound}, i.e.\ there exists a constant $C_2$ such that
\begin{equation}
\expectation{ \textnormal{term II} }
= \expectation{ | W_{T^*}  - W_{T^*}^N | }
\leq \expectation{ \sup_{t \leq 1} | W_{t}  - W_t^N | }
\leq C_2 \frac{\log(N)}{\sqrt{N}}.
\end{equation}

Bounding term I requires more work. Using \refEquation{eq:decomposition_Wt*}, the integral version of $\dot{z} = 1 + \gamma(z)$, and the triangle inequality, we find that
\begin{align}
\textnormal{term I}
&
\leq \sqrt{N} \Bigl| \int_0^{T^*} \gamma_N(Z_{sN}) \d{s} - \int_0^{\frac{T^*_N}{N}} \gamma_N(Z_{sN}) \d{s} \Bigr| +
\sqrt{N} | M_{T^*}^N -M_{T_N^*/N}^N |
\nonumber \\ &
= \textnormal{term Ia} + \textnormal{term Ib},
\end{align}
and we now proceed with bounding term Ia and Ib separately.

In order to bound term Ia, we add comparison terms $\pm \gamma( Z_{sN} / N )$ and $\pm \gamma(z(s))$ and use the triangle inequality, so that\footnote{Note that we use the notation that $\int_a^b = - \int_b^a$ when $a > b$.}
\begin{align}
\textnormal{term Ia}
&
\leq \sqrt{N} \Bigl| \int_{\frac{T^*_N}{N}}^{T^*} \gamma_N(Z_{sN}) - \gamma \Bigl( \frac{Z_{sN}}{N} \Bigr) \d{s} \Bigr|
+ \sqrt{N} \Bigl| \int_{\frac{T^*_N}{N}}^{T^*} \gamma \Bigl( \frac{Z_{sN}}{N} \Bigr) -\gamma(z(s)) \d{s} \Bigr|
\nonumber \\ &
\phantom{\leq} + \sqrt{N} \Bigr| \int_{\frac{T^*_N}{N}}^{T^*} \gamma(z(s)) \d{s} \Bigl|
\end{align}
Then by approximating $\gamma_N$ by $\gamma$, using the Lipschitz continuity of $\gamma$, and upper bounding the first two integrands, we find that
\begin{align}
\textnormal{term Ia}
&
\leq \sqrt{N} \delta_N \Bigl| \frac{T_N^*}{N} - T^* \Bigr|
+ \sqrt{N}C_L \sup_{s \leq 1} \Bigl| \frac{Z_{sN}}{N} - z(s) \Bigr| \Bigl| \frac{T_N^*}{N} - T^* \Bigr|
\nonumber \\ &
\phantom{\leq} + \sqrt{N} \Bigl| \int_{\frac{T^*_N}{N}}^{T^*} \gamma(z(s)) \d{s} \Bigr|
\end{align}
Taking the expectation and using the triangle inequality, it follows that
\begin{align}
\expectation{ \textnormal{term Ia} }
&
\leq \sqrt{N}\delta_N \expectationBig{ \Bigl| \frac{T_N^*}{N} - T^* \Bigr| }
+ \sqrt{N} C_L \expectationBig{ \sup_{s \leq 1} \Bigl| \frac{Z_{sN}}{N} - z(s) \Bigr| \Bigl| \frac{T_N^*}{N} - T^* \Bigr| }
\nonumber \\ &
\phantom{\leq}
+ \sqrt{N} \expectationBig{ \int_{\frac{T^*_N}{N}}^{T^*} \left| \gamma(z(s)) \right| \d{s} }.
\end{align}
Applying H\"{o}lder's inequality \cite{rudin_real_1987},
\begin{align}
\expectation{ \textnormal{term Ia} }
&
\leq \sqrt{N}\delta_N \Big\| \frac{T_N^*}{N} - T^* \Big\|_2
+ \sqrt{N}C_L \Big\| \sup_{s \leq 1} \Bigl| \frac{Z_{sN}}{N} - z(s) \Bigr| \Big\|_2 \Big\| \frac{T_N^*}{N} - T^* \Big\|_2
\nonumber \\ &
\phantom{\leq} + \sqrt{N} \expectationBig{ \int_{\frac{T^*_N}{N}}^{T^*} \left| \gamma(z(s)) \right| \d{s} },
\end{align}
and finally Propositions~\ref{prop:LLN_comparing_Znt_and_Zt} and \ref{prop:Bound_on_L2_norm_of_TNstar_minus_Tstar}, we end up with
\begin{equation}
\expectation{ \textnormal{term Ia} }
\leq \Omega_N \sqrt{N} ( \delta_N + C_L \omega_N ) + \sqrt{N} \expectationBig{ \int_{\frac{T^*_N}{N}}^{T^*} \left| \gamma(z(s)) \right| \d{s} }.
\label{eqn:Intermediate_bound_on_expectation_of_term_Ia}
\end{equation}

In order to deal with the last term in \refEquation{eqn:Intermediate_bound_on_expectation_of_term_Ia}, we will use a Taylor expansion of order zero around $T^*$. Specifically, we write
\begin{equation}
\gamma(z(s))
= \gamma(z(T^*))+ c (s-T^*) + R_2
= c(s-T^*) + R_2,
\end{equation}
where we have recalled that $\gamma(1) = 0$ by assumption and $z(T^*) = 1$. Then, by (i) the triangle inequality, (ii) upper bounding the integrand, and (iii) evaluating the integral, there exists a constant $C_3$ so that
\begin{align}
&
\sqrt{N} \int_{\frac{T^*_N}{N}}^{T^*} | \gamma(z(s)) | \d{s}
\eqcom{i}\leq \sqrt{N} \int_{\frac{T^*_N}{N}}^{T^*}c | s - T^* | + | R_2 | \d{s}
\nonumber \\ &
\eqcom{ii}\leq \sqrt{N} \int_{\frac{T^*_N}{N}}^{T^*}c \Bigl| \frac{T^*_N}{N}-T^* \Bigr| + | R_2 | \d{s}
\eqcom{iii}\leq C_3 \sqrt{N} \Bigl| \frac{T^*_N}{N}-T^* \Bigr|^2,
\end{align}
where for the second term we have used that $| R_2 | = \bigO{ (s-T^*)^2 }$, and that $ | s - T^* | \leq 1$ for $s \in [ T^*_N / N, T^* ]$. Therefore, by \refProposition{prop:Bound_on_L2_norm_of_TNstar_minus_Tstar},
\begin{align}
\sqrt{N} \expectationBig{ \int_{\frac{T^*_N}{N}}^{T^*} | \gamma(z(s)) | \d{s} }
&
\leq C_3 \sqrt{N} \expectationBig{ \Bigl| \frac{T^*_N}{N} - T^* \Bigr|^2 }
\nonumber \\ &
\leq C_3 \sqrt{N} \Bigl\| \frac{T_N^*}{N} - T^* \Bigr\|_2 ^2
\leq C_3 \Omega_N^2 \sqrt{N}.
\label{eqn:Intermediate_2_bound_for_IA}
\end{align}
Ultimately bounding \refEquation{eqn:Intermediate_bound_on_expectation_of_term_Ia} using \refEquation{eqn:Intermediate_2_bound_for_IA}, we conclude that there exists a constant $C_1$ such that
\begin{equation}
\expectation{ \textnormal{term Ia} }
\leq \Omega_N \sqrt{N} ( \delta_N + C_L \omega_N + C_3 \Omega_N )
\leq C_1 \omega_N^2 \sqrt{N}.
\label{eq:bound_IA}
\end{equation}

To finish the proof we still need to bound the expectation of term Ib, that is, $\sqrt{N} \expectation{ | M_{T^*}^N - M_{T_N^*/N}^N | }$. By (i) Cauchy--Schwarz's inequality, (ii) definition of the scaled martingale, (iii) calculating the increasing process similar to \refEquation{eqn:L2_Experience_of_ML2}--\refEquation{eqn:Bound_on_L2_norm_of_MtN}, and (iv) $\expectation{ ( M_t - M_s )^2 } = \expectation{ M_t^2 } - \expectation{ M_s^2 }$ for $t > s$ as a consequence of $M_t$ being a martingale, we find
\begin{align}
&
\sqrt{N} \expectation{ | M_{T_N^*/N}^N - M_{T^*}^N | }
\eqcom{i}\leq \sqrt{N} \expectation{ | M_{T_N^*/N}^N - M_{T^*}^N |^2 }^{\frac{1}{2}}
\eqcom{ii}= \frac{1}{\sqrt{N}} \expectation{ | M_{T_N^*} - M_{[T^*N]} |^2 }^{\frac{1}{2}}
\nonumber \\ &
\eqcom{iii}= \frac{1}{\sqrt{N}} \expectation{ < M_{T_N^*} - M_{[T^*N]} > }^{\frac{1}{2}}
\eqcom{iv}= \expectationBig{ \frac{1}{N} \sum_{i = T_N^* \wedge [ T^* N ] }^{ T_N^* \vee [ T^* N ] } \psi_N(Z_i) }^{\frac{1}{2}}.
\end{align}
Then (v) upper bounding $\psi_N(Z_i) \leq \bar{\psi}_N$, (vi) adding compensation terms $\pm T^*$, applying the triangle inequality and upper bounding $|T^*N-[T^*N]| \leq 1$, it follows (vii) from \refProposition{prop:Bound_on_L2_norm_of_TNstar_minus_Tstar} that
\begin{align}
\expectation{ \textrm{term Ib} }
&
= \sqrt{N} \expectation{ | M_{T_N^*/N}^N - M_{T^*}^N | }
\eqcom{v}\leq \expectationBig{ \bar{\psi}_N \Bigl| \frac{T_N^*}{N} - \frac{[T^*N]}{N} \Bigr| }^{\frac{1}{2}}
\nonumber \\ &
\eqcom{vi}\leq \Bigl( \bar{\psi}_N \Bigl\lVert \frac{T_N^*}{N} - T^* \Bigr\rVert_1 + \frac{\bar{\psi}_N}{N} \Bigr)^{\frac{1}{2}}
\eqcom{vii}\leq \Bigl( \bar{\psi}_N \Omega_N + \frac{\bar{\psi}_N}{N} \Bigr)^{\frac{1}{2}}.
\end{align}
Finally, we combine all bounds, resulting in
\begin{align}
&
\expectationBig{ \Bigl| \sqrt{N} \Bigl( \frac{T_N^*}{N} - T^* \Bigr) + W_{T^*} \Bigr| }
\leq \expectation{ \textnormal{term Ia} } + \expectation{ \textnormal{term Ib} } + \expectation{ \textnormal{term II} } + \frac{1+\bar{\gamma}_N}{\sqrt{N}}
\nonumber \\ &
\leq C_1 \omega_N^2 \sqrt{N} + \Bigl( \bar{\psi}_N \Omega_N + \frac{\bar{\psi}_N}{N} \Bigr)^{\frac{1}{2}} + C_2 \frac{\log(N)}{\sqrt{N}} + \frac{1 + \bar{\gamma}_N}{\sqrt{N}}.
\label{eqn:Penultimate_bound_in_the_proof}
\end{align}

If the distribution of the number of neighbors is such that $\delta_N = \smallO{ 1 / \sqrt{N} }$, $\bar{\gamma}_N = \smallO{ \sqrt{N} }$ and $\bar{\psi}_N = \smallO{ N^{1/4} }$, then $\omega_N = \smallO{ 1 / N^{3/8} }$ and $\Omega_N = \smallO{ 1 / N^{3/8} }$, and all the product-terms in \refEquation{eqn:Penultimate_bound_in_the_proof} converge to $0$ as $N \to \infty$. We have thus proven that under these conditions, the limit is a Gaussian random variable with variance
\begin{equation}
\sigma^2 = \expectation{ W_{T^*}^2 }.
\end{equation}
Defining $m(t) = \expectation{ W_{t}^2 }$, and using It\^{o}'s formula \cite{steele_stochastic_2001}, note that
\begin{equation}
\expectation{ W_t^2 }
= \expectationBig{ \int_0^t 2 W_s d W_s + \frac{1}{2} 2 \beta_{t} }
= 2\int_0^{t} \gamma'(z(s)) \expectation{ W_s^2 } \d{s} + \beta(t),
\end{equation}
and hence $m(t)$ satisfies the differential system
\begin{equation}
\dot{m}
= -2 \dot{\gamma}(z(t)) m(t) + \dot{\beta},
\quad
\textrm{with}
\quad
m_0 = 0.
\end{equation}
This finishes the proof.
\end{proof}

\section{Proof of \refCorollary{cor:Limiting_behavior_of_the_spatial_fluid_limit_as_c_to_zero}}
\label{appendix:Proof_of_corollary_Limiting_behavior_of_the_spatial_fluid_limit_as_c_to_zero}

\begin{proof}
Consider the expansion $u(t) = t + \sum_{i = 1}^\infty c^i u_i(t)$. Substitute into \refEquation{eq:yt}, and Taylor expand the right-hand side to obtain
\begin{align}
1 + c u_1'(t) + \bigO{ c^2 }
&
= 1 + c \exp{ \Bigl( - \int_0^t \frac{ds}{ 1 - u(s) } \Bigr) }
\nonumber \\ &
= 1 + c \exp{ \Bigl( - \int_0^t \frac{1}{1-s} + \sum_{i=1}^\infty \frac{c^i u_i(s)}{ (1-s)^{i+1} } \d{s} \Bigr) }
\nonumber \\ &
= 1 + c (1-t) \exp{ \Bigl( \sum_{i=1}^\infty \frac{c^i u_i(s)}{ (1-s)^{i+1} } \d{s} \Bigr) }
= 1 + c (1-t) \bigl( 1 + \bigO{c} \bigr).
\end{align}
Comparing terms we find that $u_1'(t) = (1-t)$ with initial condition $u_1(0) = 0$, leading to the conclusion that $u_1(t) = t ( 1 - \frac{1}{2} t )$. Therefore,
\begin{equation}
u(t)
= t + c t ( 1 - \tfrac{1}{2} t ) + \bigO{c^2}
= (1+c) t - \tfrac{1}{2} c t^2 + \bigO{c^2}.
\end{equation}

Exactly the same expansion is obtained for $l(t)$ when applying the approach up to and including order $\bigO{c}$. Since $u(t)$ is an upper bound and $l(t)$ is a lower bound for the fluid limit $z(t)$ of the spatial process, and both bounds have the same asymptotic behavior as $c \downarrow 0$, this completes the proof.
\end{proof}

\end{document}